\theoremstyle{plain}
\newtheorem{theorem}{Theorem}[section]
\newtheorem{corollary}[theorem]{Corollary}
\newtheorem{lemma}[theorem]{Lemma}
\newtheorem{proposition}[theorem]{Proposition}
\theoremstyle{remark}
\newtheorem{claim}{Claim}
\newtheorem*{claim*}{Claim}
\begin{document}

\title[Polish modules over subrings of $\mathbb Q$]{Polish modules over subrings of $\mathbb Q$}

\author{Dexuan Hu} 
\author{S{\l}awomir Solecki}
\address{Department of Mathematics, 
Cornell University, Ithaca, NY 14853}
\email{dh623@cornell.edu}
\email{ssolecki@cornell.edu}

\thanks{The authors were partially supported by NSF grant DMS-1954069.}

\subjclass[2010]{03E15, 13C05, 13L05} 

\keywords{Polish modules, analytic P-ideals}

%\date{October 2022}

\begin{abstract}
We give a method of producing a Polish module over an arbitrary subring of $\mathbb Q$ from an ideal of subsets of $\mathbb N$ 
and a sequence in $\mathbb N$. 
The method allows us to construct two Polish $\mathbb Q$-vector spaces, $U$ and $V$, such that 
\begin{enumerate}
\item[---] both $U$ and $V$ embed into $\mathbb R$ 
but

\item[---] $U$ does not embed into $V$ and $V$ does not embed into $U$,
\end{enumerate}
where by an embedding we understand a continuous $\mathbb Q$-linear injection. 
This construction answers a question of Frisch and Shinko \cite{FS}. In fact, our method produces a large number of incomparable with respect to 
embeddings Polish $\mathbb Q$-vector spaces. 
\end{abstract}

\maketitle

\section{Introduction}\label{S:1}

\subsection{The background for and outline of the main results} 
This is a paper on a connection between Descriptive Set Theory and Commutative Algebra. 
In \cite{FS}, Frisch and Shinko initiated the study of Polish modules and embeddings among them. 
The present work answers a question from that paper \cite[Problem~4.4]{FS} by producing incomparable with respect to embeddings Polish $R$-modules for subrings $R$ of $\mathbb Q$.
In fact, we give a rather general method of constructing Polish $R$-modules for such $R$. 
This construction associates Polish $R$-modules with certain ideals of subsets of $\mathbb N$ (translation invariant analytic P-ideals) and certain 
sequences in $\mathbb N$ (base sequences) under the assumption of a suitable coherence between the ideals and the sequences. The procedure 
appears to be quite canonical and may have other applications.

Before explaining the context of the question from \cite{FS} and our construction, which will require some definitions, 
we formulate a statement that is a special case of Corollary~\ref{T:A}. 
The statement does not require any extra definitions as it 
can be phrased entirely in the language of vector spaces over $\mathbb Q$ and the Borel structure of $\mathbb R$; on the other hand, it 
is a representative consequence of our main results.  
We treat the real numbers $\mathbb R$ as a $\mathbb Q$-vector space.

\smallskip 

\noindent {\em There exists a family $F_x$, for $x\subseteq {\mathbb N}$, of uncountable Borel $\mathbb Q$-vector subspaces of $\mathbb R$ 
such that, for all $x,y\subseteq {\mathbb N}$, 
\begin{enumerate}
\item[---] if $x\setminus y$ is finite, then $F_x\subseteq F_y$ and 

\item[---] if $x\setminus y$ is infinite, then each Borel $\mathbb Q$-linear map $F_x\to F_y$ is constantly equal to zero.
\end{enumerate}
Each $F_x$, for $x\subseteq {\mathbb N}$, can be taken to be a countable union of closed subsets of $\mathbb R$.}

\smallskip

Now, we recall the notion of Polish module from \cite{FS}. Actually, 
we recall only the special case of the definition (when the ring is commutative and countable) that is relevant to stating the problem from \cite{FS}. 
Let $R$ be a commutative countable ring (with unity $1$). By a {\bf Polish $R$-module} $M$ we understand an abelian Polish group, where $+$ denotes the group operation, and a 
continuous function 
\[
R\times M\ni (r,m) \to r\cdot m\in M,
\]
with $R$ equipped with the discrete topology, such that 
\[
\begin{split}
r\cdot (m_1+m_2) = r\cdot m_1 + r\cdot m_2,& \;\; (r_1+r_2)\cdot m = r_1\cdot m+r_2\cdot m,\\ 
r_1\cdot(r_2\cdot m) = (r_1r_2)\cdot m,& \;\; 1\cdot m=m, 
\end{split}
\]
for all $r, r_1, r_2\in R$ and $m, m_1, m_2\in M$. 
Of course, if $R$ is a countable field, then an $R$-module is a vector space over $R$. 
For the record, we also recall the notion of homomorphism and embedding among $R$-modules. If $M_1$ and $M_2$ are $R$-modules, 
a function $f\colon M_1\to M_2$ is called an {\bf $R$-module homomorphism} 
if it is a group homomorphism and $f(rm)= rf(m)$ for all $r\in R$ and $m\in M_1$. An injective $R$-module homomorphism is called an {\bf $R$-embedding}. 
We use the suggestive notation from \cite{FS} 
\[
M_1\sqsubseteq^R M_2
\]
to indicate that there is a continuous $R$-embedding $M_1\to M_2$; and also  
\[
M_1\sqsubset^R M_2
\]
if $M_1\sqsubseteq^R M_2$ and $M_2\not\sqsubseteq^R M_1$. 

It was proved in \cite[Theorem~1.2 and Section~3]{FS} that 

\smallskip
\noindent {\em if $R$ is a countable Noetherian ring, then there is a family of uncountable Polish $R$-modules $M_S$, 
parametrized by nontrivial quotients $S$ of $R$, such that, for each uncountable Polish $R$-module $M$, we have $M_S\sqsubseteq^R M$ 
for some $S$.} 

\smallskip 

\noindent Recall that a commutative ring is {\bf Noetherian} if each strictly increasing under inclusion sequence of ideals is finite.
The modules $M_S$ have an explicit definition, but it will not be relevant here. 
The family of nontrivial quotients $S$ of $R$ is countable; thus, the above statement asserts that the class of uncountable Polish $R$-modules 
has a countable basis under $R$-embeddings. The situation is even more striking when $R$ is a countable field. A field $R$ has only one non-trivial quotient, namely, $R$ itself, 
and is obviously Noetherian. So the above family of the $M_S$-s has only one element $M_R$. 
Thus, 

\smallskip
\noindent {\em if $R$ is a countable field, then there is an uncountable Polish $R$-vector space $M_R$ 
such that, for each uncountable Polish $R$-vector space $M$, we have $M_R\sqsubseteq^R M$.}
\smallskip

\noindent So the class of uncountable Polish $R$-vector spaces has a one-element basis under embedding.

A particularly interesting case is that of $R={\mathbb Q}$. In this case, there is a clasical uncountable Polish $\mathbb Q$-vector space, namely, $\mathbb R$. 
The above statement gives $M_{\mathbb Q}\sqsubseteq^{\mathbb Q} {\mathbb R}$. As it was pointed out in \cite{FS}, 
there is no continuous $\mathbb Q$-embedding from $\mathbb R$ to $M_{\mathbb Q}$, so we have 
\[
M_{\mathbb Q} \sqsubset^{\mathbb Q} {\mathbb R}. 
\]
Now, a natural question arises whether there exists a Polish $\mathbb Q$-vector space $V$ such that 
\begin{equation}\label{E:que}
M_{\mathbb Q} \sqsubset^{\mathbb Q} V \sqsubset^{\mathbb Q} {\mathbb R}. 
\end{equation} 
This question was asked as \cite[Problem~4.4]{FS}. We answer it in the affirmative by proving the following theorem.

\begin{theorem}\label{T:B}
Let $R$ be a subring of $\mathbb Q$ not equal to $\mathbb Z$. 
There exists a family $V_x$, for $x\subseteq{\mathbb N}$, of uncountable Polish $R$-modules such that, for all $x,y\subseteq {\mathbb N}$, we have 
\begin{enumerate}
\item[(i)] $V_x\sqsubseteq^R {\mathbb R}$, ; 

\item[(ii)] if $x\setminus y$ is finite, then $V_x\sqsubseteq^R V_y$; 

\item[(iii)] if $x\setminus y$ is infinite, then each continuous $R$-module homomorphism $V_x\to V_y$ is identically equal to zero; in particular, $V_x\not\sqsubseteq^R V_y$. 
\end{enumerate}

In fact, there exist continuous $R$-module embeddings $f_x\colon V_x\to {\mathbb R}$ witnessing (i) so that (ii) is witnessed by a continuous $R$-module emdeding
$g\colon V_x\to V_y$ with $f_x=f_y\circ g$. Further, the image of $f_x$ is a countable union of closed subsets of $\mathbb R$. 
\end{theorem}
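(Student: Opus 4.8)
The plan is to develop the ideal-to-module construction advertised in the abstract and then instantiate it appropriately. I would first record the elementary structure of subrings of $\mathbb{Q}$: any such $R\ne\mathbb{Z}$ has the form $R=\{a/b : a\in\mathbb{Z},\ b\text{ a product of primes from }P\}$ for a nonempty set $P$ of primes, so we may fix a prime $p$ invertible in $R$. The core is a construction sending a translation-invariant analytic P-ideal $I$ on $\mathbb{N}$ — written $I=\mathrm{Exh}(\varphi)$ for a lower semicontinuous, translation-invariant, exhaustive submeasure $\varphi$ — together with a base sequence $\bar b=(b_k)$ coherent with $\varphi$, to a Polish $R$-module $M(I,\bar b)$. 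I would realize $M(I,\bar b)$ concretely inside $\mathbb{R}$: fix reals $\alpha_k\to 0$ decreasing rapidly enough (in a way governed by $\bar b$ and $\varphi$) that the representation $\sum_k v_k\alpha_k$ is unique for integer coefficient sequences $(v_k)$ with $\varphi(v\res[n,\infty))\to 0$, let $M(I,\bar b)$ be the set of such sums, and topologize it by transporting the complete metric induced by $\varphi$ from the coefficient side. Multiplication by $1/p$ acts on coefficients as a ``digit shift'' dictated by $\bar b$; coherence of $\bar b$ with $\varphi$ together with \emph{translation invariance} of $I$ is exactly what keeps this operation inside the module, and this is where the hypothesis $R\ne\mathbb{Z}$ (equivalently $P\ne\emptyset$) is used. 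I would then verify that $M(I,\bar b)$ is an uncountable Polish $R$-module (uncountability because a translation-invariant analytic P-ideal strictly larger than $\mathrm{Fin}$ admits continuum-many exhausted integer sequences) and that $(v_k)\mapsto\sum_k v_k\alpha_k$ is a continuous $R$-embedding into $\mathbb{R}$ whose image is $F_\sigma$: the image equals $\bigcup_{m,n,j}\{\sum_k v_k\alpha_k : \varphi(v\res[n,\infty))\le 1/m,\ |\sum_k v_k\alpha_k|\le j\}$, and lower semicontinuity of $\varphi$ plus boundedness makes each piece compact, hence closed.

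Next I would produce the family. Fix a partition $\mathbb{N}=\bigsqcup_k B_k$ into finite intervals adapted to $\bar b$, with an integer modulus $N_k>1$ attached to $B_k$, all built from a single periodic-with-growing-period scheme so that translation invariance is automatic. For $x\subseteq\mathbb{N}$ let $I_x$ be the translation-invariant analytic P-ideal obtained from a fixed ``small'' P-ideal by switching on extra density-type weight exactly on the blocks $B_k$ with $k\in x$, arranged so that $x\subseteq^* y$ implies $I_x\subseteq^* I_y$ and so the admissible coefficient sequences grow monotonically with $x$. Set $V_x:=M(I_x,\bar b)$. Property (i) and the $F_\sigma$ clause are then immediate from the first paragraph, with $f_x$ the canonical embedding. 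For (ii), when $x\setminus y$ is finite the inclusion $I_x\subseteq^* I_y$ shows — after absorbing finitely many exceptional blocks — that every coefficient sequence admissible for $V_x$ is admissible for $V_y$, with a finer topology; the resulting inclusion-type map $g\colon V_x\to V_y$ is a continuous $R$-embedding, and since $f_x$ and $f_y$ are both ``$\sum_k v_k\alpha_k$'' on coefficients, $f_x=f_y\circ g$ holds literally.

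The heart of the theorem, and the step I expect to be the main obstacle, is (iii). Let $h\colon V_x\to V_y$ be a continuous $R$-module homomorphism with $x\setminus y$ infinite; I must show $h=0$. Composing with $f_y$ and using its injectivity reduces this to showing that every continuous $R$-homomorphism $\psi\colon V_x\to\mathbb{R}$ with range contained in $F_y:=f_y(V_y)$ is zero. I would prove a rigidity principle in two matching halves. \emph{First}: a nonzero continuous $R$-homomorphism $\psi\colon V_x\to\mathbb{R}$ cannot ``forget'' the active blocks of $x$ — continuity relative to the $\varphi_x$-metric can suppress at most a $\varphi_x$-null set of blocks, so for infinitely many $k\in x\setminus y$ the range $\psi(V_x)$ contains the image of a full combinatorial cube of side $N_k$ over the coordinates of $B_k$, i.e. a finite arithmetic configuration of $\prod_{i\in B_k}N_i$ distinct reals lying within distance tending to $0$ of $0$, with these sizes tending to infinity. \emph{Second}: because $I_y$ carries no extra weight on $B_k$ when $k\notin y$, the set $F_y$ is correspondingly sparse at the scale of $B_k$ and cannot contain arithmetic configurations of that size and spacing near $0$. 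Matching the constants in these two estimates — quantifying ``a continuous homomorphism cannot forget the active blocks'' against ``$F_y$ is sparse on the inactive blocks'', with everything controlled by the coherence between $\bar b$ and the submeasures $\varphi_x,\varphi_y$ — forces $\psi=0$, and this bookkeeping is the genuinely new technical work, over and above the general construction; the other subtle point folded into it is showing the obstruction on the infinitely many blocks of $x\setminus y$ propagates to all of $V_x$ even when $x\cap y$ is infinite, using that the $\varphi_x$-topology couples all active blocks so that any continuous homomorphism out of $V_x$ is determined by its values on the subgroup topologically generated by the $(x\setminus y)$-blocks.

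Finally, (iii) gives $V_x\not\sqsubseteq^R V_y$ in the stated cases, so the $V_x$ are pairwise incomparable along antichains of $\subseteq^*$, and together with (i) and (ii) — and with the special case $R=\mathbb{Q}$, where $\mathbb{R}$ itself sits strictly above $M_{\mathbb{Q}}$ by the Frisch–Shinko results recalled above — one obtains the promised strictly intermediate Polish $\mathbb{Q}$-vector spaces and the full statement of Corollary~\ref{T:A}. I expect the construction and properties (i), (ii) to be routine once the submeasure/base-sequence formalism is in place; the hard part is entirely the rigidity estimate underpinning (iii).
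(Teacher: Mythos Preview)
Your overall architecture---build Polish $R$-modules from translation-invariant analytic P-ideals, realize them as $F_\sigma$ subgroups of $\mathbb{R}$, and index by $x\subseteq\mathbb{N}$ through a block-weighted family of submeasures---matches the paper's, and your treatment of (i), (ii), and the $F_\sigma$ clause is essentially the same as theirs (Theorem~\ref{T:C}, Proposition~\ref{P:incl}, Lemma~\ref{L:hcomp}).

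The gap is in your plan for (iii). The paper's argument rests on a short structural lemma you do not mention: since the base sequence is \emph{uniform}, the ring $\mathbb{Q}_{\vv a}$ coincides with the set of reals having finitely many nonzero $\vv a$-digits, and this set is dense in $I[\vv a]$ for the submeasure topology (Lemma~\ref{L:separ}). Consequently every continuous $\mathbb{Q}_{\vv a}$-module homomorphism $I[\vv a]\to J[\vv a]$ is multiplication by the single real number $c=f(1)$ (Lemma~\ref{L:hom}). With this in hand, (iii) reduces to the concrete statement $c\cdot I_x[\vv a]\not\subseteq I_y[\vv a]$ for all $c\neq 0$, which the paper proves directly (Theorem~\ref{T:D}) by manufacturing, for each $\epsilon>0$, an element $w\in I_x[\vv a]$ with $\rho_x(w,0)<\epsilon$ but $\psi_y(j(cw))>d$; the Pettis theorem then rules out continuity at $0$.

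Your proposed route bypasses this lemma and instead asserts that a nonzero continuous $\psi\colon V_x\to\mathbb{R}$ must carry ``full combinatorial cubes'' over the $(x\setminus y)$-blocks to configurations of \emph{distinct} reals of prescribed size. That distinctness is exactly what is unjustified: a priori an $R$-linear $\psi$ could collapse such cubes, and nothing in ``continuity can suppress at most a $\varphi_x$-null set of blocks'' prevents it. In fact the only way to rule this out is to observe that the digit relations $e_k=a_{k+1}e_{k+1}$ force $\psi(e_k)=a_{k+1}\psi(e_{k+1})$, so all $\psi(e_k)$ are determined by a single value---which is precisely the scalar-multiplication lemma again. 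Likewise, your ``propagation'' step (that the homomorphism is determined by its values on the subgroup generated by the $(x\setminus y)$-blocks) is not a general feature of continuous group homomorphisms and again only follows once you know $\psi$ is scalar. So the argument as written is circular at its key point; inserting Lemma~\ref{L:hom} (and the uniformity of $\vv a$ needed for it) fixes the plan and collapses your two ``matching halves'' into the single clean estimate of Theorem~\ref{T:D}.
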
 

To see clearly the relationship between Theorem~\ref{T:B} and \cite[Problem~4.4]{FS}, take $R={\mathbb Q}$ and 
note that if $x\subseteq {\mathbb N}$ is such that both $x$ and ${\mathbb N}\setminus x$ are infinite, then $V_x$ as in Theorem~\ref{T:B} fulfills \eqref{E:que}. Indeed, by 
Theorem~\ref{T:B}(i) and the defining property of $M_{\mathbb Q}$, we have 
\[
M_{\mathbb Q} \sqsubseteq^{\mathbb Q} V_x\sqsubseteq^{\mathbb Q} {\mathbb R}. 
\]
Set $y= {\mathbb N}\setminus x$. If ${\mathbb R}\sqsubseteq^{\mathbb Q} V_x$, then, by Theorem~\ref{T:B}(i) for $y$ and transitivity of $\sqsubseteq^{\mathbb Q}$, 
we would have
$V_y\sqsubseteq^{\mathbb Q} V_x$ contradicting Theorem~\ref{T:B}(iii). If $V_x\sqsubseteq^{\mathbb Q} M_{\mathbb Q}$, then, by definition of $M_{\mathbb Q}$ and 
transitivity of $\sqsubseteq^{\mathbb Q}$, we would have $V_x\sqsubseteq^{\mathbb Q}V_y$ again contradicting Theorem~\ref{T:B}(iii). It follows that \eqref{E:que} 
holds with $V= V_x$. 

As pointed out in Section~\ref{Su:bas}, subrings of $\mathbb Q$ are Noetherian; therefore, the above theorem fits into the context from \cite{FS} described above 
even for $R$ not equal to $\mathbb Q$. 

We have a corollary of Theorem~\ref{T:B} that is phrased in terms of the Borel structure of $\mathbb R$ only, and which generalizes the statement at the top of the introduction.

\begin{corollary}\label{T:A} 
Let $R$ be a subring of $\mathbb Q$ not equal to $\mathbb Z$. 
There exists a family $F_x$, for $x\subseteq {\mathbb N}$, of uncountable Borel $R$-submodules of $\mathbb R$ 
such that, for all $x,y\subseteq {\mathbb N}$, 
\begin{enumerate}
\item[(i)] if $x\setminus y$ is finite, then $F_x\subseteq F_y$ and 

\item[(ii)] if $x\setminus y$ is infinite, then each Borel $R$-module homomorphism $F_x\to F_y$ is constantly equal to zero. 
\end{enumerate}
Each $F_x$, for $x\subseteq {\mathbb N}$, can be taken to be a countable union of closed sets. 
\end{corollary}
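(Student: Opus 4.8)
The plan is to derive Corollary~\ref{T:A} directly from Theorem~\ref{T:B} by transporting the modules $V_x$ into $\mathbb R$ along the embeddings $f_x$. Concretely, I would set $F_x := f_x(V_x) \subseteq \mathbb R$ for each $x \subseteq \mathbb N$, where the $f_x$ are the continuous $R$-module embeddings furnished by the last paragraph of Theorem~\ref{T:B}. Since $V_x$ is an $R$-module and $f_x$ is an $R$-module homomorphism, $F_x$ is automatically an $R$-submodule of $\mathbb R$. Because $V_x$ is uncountable and $f_x$ is injective, $F_x$ is uncountable. The final sentence of Theorem~\ref{T:B} says the image of $f_x$ is a countable union of closed subsets of $\mathbb R$, which is exactly the $F_\sigma$ assertion in the Corollary; in particular each $F_x$ is Borel.

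For part (i): suppose $x \setminus y$ is finite. Theorem~\ref{T:B}(ii) gives a continuous $R$-embedding $g\colon V_x \to V_y$ with $f_x = f_y \circ g$. Hence
\[
F_x = f_x(V_x) = f_y(g(V_x)) \subseteq f_y(V_y) = F_y,
\]
which is precisely the desired inclusion $F_x \subseteq F_y$.

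For part (ii): suppose $x \setminus y$ is infinite, and let $h\colon F_x \to F_y$ be a Borel $R$-module homomorphism. The idea is to pull $h$ back to a Borel $R$-module homomorphism $V_x \to V_y$ and invoke Theorem~\ref{T:B}(iii). Since $f_x\colon V_x \to F_x$ is a continuous (in particular Borel) $R$-module isomorphism between the Polish group $V_x$ and the Borel group $F_x$, and $f_y\colon V_y \to F_y$ is likewise a Borel $R$-module isomorphism, the composite $f_y^{-1} \circ h \circ f_x\colon V_x \to V_y$ is well defined and is an $R$-module homomorphism. The point requiring care is Borel measurability of the inverse maps $f_x^{-1}$ and $f_y^{-1}$: here I would use the standard descriptive-set-theoretic fact that a Borel (indeed continuous) injection between standard Borel spaces has Borel image and a Borel inverse on that image (Lusin--Souslin). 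Thus $f_y^{-1} \circ h \circ f_x$ is a Borel $R$-module homomorphism $V_x \to V_y$, so by Theorem~\ref{T:B}(iii) it is identically zero; applying $f_y$ and using injectivity of $f_y$, we conclude $h \equiv 0$ on $F_x$, i.e. $h$ is constantly equal to zero.

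The only genuine subtlety — and the step I expect a careful reader to want spelled out — is the measurability of $f_x^{-1}$ and $f_y^{-1}$, which is why the Lusin--Souslin theorem is invoked; everything else is a routine transport of structure along the embeddings. Note that $\mathbb R$ here carries its usual topology, and the $R$-module structure on $\mathbb R$ is the restriction of scalar multiplication by elements of $R \subseteq \mathbb Q$, which is continuous; this is the structure with respect to which the $F_x$ are submodules.
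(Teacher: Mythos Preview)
Your approach is essentially identical to the paper's: define $F_x=f_x(V_x)$, use the compatibility $f_x=f_y\circ g$ for the inclusion in (i), and for (ii) pull a Borel homomorphism $h\colon F_x\to F_y$ back to $f_y^{-1}\circ h\circ f_x\colon V_x\to V_y$, invoking Lusin--Souslin so that the inverse maps are Borel. That is exactly what the paper does.

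There is, however, one genuine missing step in your argument for (ii). You conclude that $f_y^{-1}\circ h\circ f_x$ is a \emph{Borel} $R$-module homomorphism $V_x\to V_y$, and then immediately appeal to Theorem~\ref{T:B}(iii). But Theorem~\ref{T:B}(iii) only asserts that every \emph{continuous} $R$-module homomorphism $V_x\to V_y$ vanishes. You need one more line: since $V_x$ and $V_y$ are Polish groups, a Borel group homomorphism between them is automatically continuous by Pettis' theorem (or the Baire-category automatic continuity argument). The paper makes exactly this appeal. Once you insert that sentence, your proof is complete and matches the paper's.
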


Note that that the Borel complexity (countable unions of closed sets) in the conclusion of Corollary~\ref{T:A} is clearly best possible. 

\begin{proof}[Proof of Corollary~\ref{T:A} from Theorem~\ref{T:B}] Let $V_x$ and $f_x$ for $x\subseteq {\mathbb N}$, be as in Theorem~\ref{T:B}. 
Let 
\[
F_x = f_x ( V_x).
\]
Clearly $F_x$ is an $R$-submodule of $\mathbb R$. Directly from the properties of $f_x$ in Theorem~\ref{T:B}, $F_x$ is $F_\sigma$ and (i) holds. 
If $x\setminus y$ is infinite and 
$f\colon F_x\to F_y$ is a Borel $R$-module homomorphism, then $(f_y)^{-1}\circ f \circ f_x$ is an $R$-module homomorphism from 
$V_x$ to $V_y$, which is Borel since the inverse of a Borel bijection is Borel. Thus, by Pettis Theorem \cite[Theorem 9.10]{Ke}, 
it is continuous, and so $f$ is constantly equal to $0$ by Theorem~\ref{T:B}. 
\end{proof}

The paper is structured as follows. 
Theorem~\ref{T:B} is a consequence of a general method of constructing Polish modules over subrings of $\mathbb Q$, which we present 
in Section~\ref{S:2}; see especially Theorem~\ref{T:C}. Then, in Section~\ref{S:3}, we prove a result, Theorem~\ref{T:D}, 
on strong non-containment, under appropriate conditions, of modules produced 
using the method from Section~\ref{S:2}. Finally, in Section~\ref{S:4}, we apply the results from Sections~\ref{S:2} and \ref{S:3} 
to prove Theorem~\ref{T:B}.

\subsection{Conventions} 
By $\mathbb N$ we denote the set of all positive integers; so $0\not\in {\mathbb N}$. 
The power sets of $\mathbb N$, ${\mathcal P}({\mathbb N})$, is often regarded as a compact metric space 
via the canonical identification with $\{ 0,1\}^{\mathbb N}= 2^{\mathbb N}$. 
Further, ${\mathbb R}_{\geq 0}$ stands for the set 
of non-negative real numbers. 
To keep notation lighter, for $r,s\in {\mathbb R}$,  set 
\begin{equation}\label{E:abm}
r\ominus s = |r-s|. 
\end{equation} 
Finally, a subset of a metric space is called 
\[
F_\sigma
\] 
if it is a countable union of closed sets.

\section{A class of Polish modules}\label{S:2}

In this section, we present the main construction of the paper. It associates a Polish module with two inputs: 
an ideal (of a certain type) of subsets of $\mathbb N$ and a base sequence adapted to the ideal. 
These two main objects in the construction---ideals of subsets of $\mathbb N$ and base sequences---are defined in Section~\ref{Su:baid}. 
After some preliminary work in Sections~\ref{Su:bas} and \ref{Su:id}, we define the Polish modules and prove their basic properties in Section~\ref{Su:pom}. 

Our construction extends the one in \cite[Section~3]{So2}, where Polishable subgroups of $\mathbb R$ were associated with certain ideals of subsets of $\mathbb N$.

\subsection{Base sequences and ideals of sets of natural numbers}\label{Su:baid}

A sequence 
\[
\vv{a}= (a_n)
\] 
is called a {\bf base sequence} if $a_n\in {\mathbb N}$ and $a_n\geq 2$, for each $n\in {\mathbb N}$. 
A non-empty family $I$ of subsets of $\mathbb N$ is called an {\bf ideal} if it is closed under taking finite unions and subsets. 
Additionally, in this paper, we will always assume that $\{ n\}\in I$ for each $n\in {\mathbb N}$. 
The two types of objects are related to each other through the following notion. A base sequence $\vv a$ is called {\bf adapted to} $I$ if 
\[
\{ n\mid a_n\not= a_{n+1}\} \in I. 
\]

\subsection{Base sequences}\label{Su:bas}

Base sequences are useful to us for two reasons---each such sequence gives a representation of real numbers and it determines a subring of $\mathbb Q$.

\subsubsection{Representations of non-negative reals}

Given a base sequence $\vv a$, each $r\in {\mathbb R}_{\geq 0}$ has a unique representation as 
\begin{equation}\label{E:real}
r = [r] + \sum_{n=1}^\infty \frac{r_n}{a_1\cdots a_n}, 
\end{equation} 
where $[r]$ is the integer part of $r$ and $r_n$ is such that $0\leq r_n <a_n$ and $r_n\not= a_n-1$ for infinitely many $n$. 
We leave checking the above assertion to the reader. Obviously, this is analogous to representing $r$ using its decimal expansion. One can extend the representation \eqref{E:real} 
to negative reals, but we will not need this extention in what follows. For $r\in {\mathbb R}_{\geq 0}$ and $n\in {\mathbb N}$, $r_n$ will stand for the $n$-th digit of $r$ in 
the representation of $r$ as in \eqref{E:real}.

Fix a base sequence $\vv a$. 
\begin{lemma}\label{L:rul}  
Let $r, s\in{\mathbb R}_{\geq 0}$. 
\begin{enumerate}
\item[(i)] If $0 \leq r < \frac{1}{a_1a_2\cdots a_l}$, then $r_i=0$ for all $i \leq l$. 
\item[(ii)] If $0 \leq s-r < \frac{1}{a_1a_2\cdots a_l}$ and $(r_l\not= a_l-1$ or $s_l\not= 0)$, 
then  $s_i= r_i$ for all $i < l$.
\end{enumerate}
\end{lemma}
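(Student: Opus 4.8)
The plan is to reduce everything to two elementary facts about the mixed-radix expansion \eqref{E:real}. For $r\in{\mathbb R}_{\ge 0}$ and an integer $k\ge0$ I set $t_k(r)=\sum_{n>k}\frac{r_n}{a_1\cdots a_n}$ (the ``tail'') and $h_k(r)=r-t_k(r)=[r]+\sum_{n=1}^{k}\frac{r_n}{a_1\cdots a_n}$ (the ``head''), using the convention that the empty product $a_1\cdots a_0$ equals $1$. The first fact is that $0\le t_k(r)<\frac{1}{a_1\cdots a_k}$: non-negativity is clear, and the upper bound follows by summing the telescoping series $\sum_{n>k}\frac{a_n-1}{a_1\cdots a_n}=\frac{1}{a_1\cdots a_k}$ and using that $r_n\ne a_n-1$ for infinitely many $n$. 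The second fact is that $a_1\cdots a_k\cdot h_k(r)$ is a non-negative integer, so $h_k(r)$ is a non-negative integer multiple of $\frac{1}{a_1\cdots a_k}$; moreover --- exactly as in the (omitted) verification that \eqref{E:real} is unique --- the single number $h_k(r)$ determines $[r]$ and $r_1,\dots,r_k$. In particular, proving part~(ii) amounts to proving $h_{l-1}(r)=h_{l-1}(s)$.

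For part~(i) I would argue as follows. Assuming $0\le r<\frac{1}{a_1\cdots a_l}$, the head $h_l(r)$ is a non-negative multiple of $\frac{1}{a_1\cdots a_l}$ with $0\le h_l(r)\le r<\frac{1}{a_1\cdots a_l}$, hence $h_l(r)=0$. As the contributions of $[r]$ and of each $r_i$ with $i\le l$ to $h_l(r)$ are non-negative and sum to $0$, they all vanish, which is (more than) the assertion $r_i=0$ for $i\le l$.

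For part~(ii) I would begin by comparing the heads at level $l-1$. Both $h_{l-1}(r)$ and $h_{l-1}(s)$ are non-negative multiples of $\frac{1}{a_1\cdots a_{l-1}}$, so their difference equals $\frac{m}{a_1\cdots a_{l-1}}$ for some $m\in{\mathbb Z}$, and writing $s-r=\frac{m}{a_1\cdots a_{l-1}}+\bigl(t_{l-1}(s)-t_{l-1}(r)\bigr)$ and combining $0\le s-r<\frac{1}{a_1\cdots a_l}\le\frac{1}{2a_1\cdots a_{l-1}}$ (here $a_l\ge2$ is used) with $0\le t_{l-1}(s),t_{l-1}(r)<\frac{1}{a_1\cdots a_{l-1}}$ I expect to pin $m$ down to $m\in\{0,1\}$. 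The case $m=0$ gives $h_{l-1}(r)=h_{l-1}(s)$, which is the claim; so the whole content of the lemma sits in excluding $m=1$. For $m=1$, the inequality $s-r<\frac{1}{a_1\cdots a_l}$ should force $t_{l-1}(r)-t_{l-1}(s)>\frac{a_l-1}{a_1\cdots a_l}$, and then, using $t_{l-1}(r)=\frac{r_l}{a_1\cdots a_l}+t_l(r)<\frac{r_l+1}{a_1\cdots a_l}$ together with $t_{l-1}(s)\ge0$, I get $r_l=a_l-1$; at this point the extra hypothesis $(r_l\ne a_l-1$ or $s_l\ne0)$ yields $s_l\ge1$. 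On the other hand the same inequality forces $t_{l-1}(s)<\frac{1}{a_1\cdots a_{l-1}}-\frac{a_l-1}{a_1\cdots a_l}=\frac{1}{a_1\cdots a_l}$, contradicting $t_{l-1}(s)\ge\frac{s_l}{a_1\cdots a_l}\ge\frac{1}{a_1\cdots a_l}$. Hence $m=0$.

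The step I expect to be the main obstacle is exactly this $m=1$ case: it is the configuration of a ``carry'' crossing digit position $l$, and the point of the extra hypothesis is precisely to forbid the digit pattern $(r_l=a_l-1,\ s_l=0)$ that such a carry must create. Organizing the argument around the head/tail decomposition at level $l-1$ --- rather than chasing individual digits --- is what I expect to make both the case analysis and this last observation clean.
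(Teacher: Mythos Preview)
Your proof is correct. The paper's argument is much terser: for (i) it simply says ``immediate,'' and for (ii) it first applies (i) to the non-negative number $s-r$ to conclude $(s-r)_i=0$ for all $i\le l$, and then appeals to a ``straightforward calculation'' (essentially digit-by-digit addition $s=r+(s-r)$, where the only way a digit $s_i$ with $i<l$ can differ from $r_i$ is via a carry out of position $l$, which forces $r_l=a_l-1$ and $s_l=0$). Your head/tail decomposition at level $l-1$ is a genuinely different organization: you never compute the digits of $s-r$, and you replace the informal carry argument by clean inequalities on $t_{l-1}(r)$ and $t_{l-1}(s)$ that pin down $m\in\{0,1\}$ and rule out $m=1$. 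The paper's route is shorter to state but leaves the crucial step unwritten; yours makes that step fully explicit and arguably more robust, at the cost of a little extra bookkeeping.
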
 

\begin{proof} (i) is immediate. To see point (ii), note that, by (i), $(s-r)_i=0$ for all $i\leq l$. By a straightforward calculation, 
this condition, in conjunction with  $r_l\not= a_l-1$ or $s_l\not=0 $, implies that $s_i=r_i$ for all $i<l$. 
\end{proof} 

The next lemma on the continuity of the digits of a number is essentially a qualitative version of Lemma~\ref{L:rul}(ii). 

\begin{lemma} \label{L:conv digi}
 Let $\big(r(k)\big)_{k \in \mathbb N}$, be a sequence in $\mathbb R_{\geq 0}$ converging to $r\in {\mathbb R}_{\geq 0}$. 
 If either one of the following two conditions holds 
 \begin{enumerate}
     \item[(i)] $r$ has infinitely many non-zero digits,

     \item[(ii)] $r(k) \geq r$ for all $k$,
 \end{enumerate}
 then, for each $n \in \mathbb N$, ${r(k)}_n = r_n \hbox{ for large enough } k$.
\end{lemma}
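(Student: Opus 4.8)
The plan is to prove Lemma~\ref{L:conv digi} by reducing both cases to an application of Lemma~\ref{L:rul}(ii), after first securing that the hypotheses there can be met for large $k$. Fix $n\in\mathbb N$. The key quantity is the threshold $\varepsilon_l = \frac{1}{a_1a_2\cdots a_l}$ for a suitable $l > n$. Since $r(k)\to r$, for any fixed $l$ we have $|r(k)-r| < \varepsilon_l$ for all large enough $k$; the issue is only the parenthetical side condition in Lemma~\ref{L:rul}(ii), namely that ``$r_l\ne a_l-1$ or $s_l\ne 0$'' holds for the relevant pair $\{r,s\}$ (in whichever order makes the difference non-negative).

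\emph{Case (i).} If $r$ has infinitely many non-zero digits, pick $l > n$ with $r_l \ne 0$; in particular $r_l \ne a_l - 1$ is not what we need, rather we use that the side condition for the pair $(r, r(k))$ requires ``$r_l \ne a_l-1$ or $r(k)_l \ne 0$'', and for the pair $(r(k), r)$ requires ``$r(k)_l \ne a_l - 1$ or $r_l \ne 0$''. The second of these holds outright because $r_l \ne 0$. So: for large $k$ we have $|r(k) - r| < \varepsilon_l$, hence either $0 \le r(k) - r < \varepsilon_l$ or $0 \le r - r(k) < \varepsilon_l$. In the first subcase apply Lemma~\ref{L:rul}(ii) to $r \le r(k)$ using ``$r_l \ne 0$'' (so $r_l \ne a_l - 1$ fails but we instead need the version with $s = r(k)$, $r = r$... ) — I would restate: actually to cover both orderings cleanly, note that whenever $r_l \ne 0$ \emph{and} $r_l \ne a_l - 1$ we may apply Lemma~\ref{L:rul}(ii) in \emph{either} direction. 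Since $r$ has infinitely many non-zero digits and, by the normalization in \eqref{E:real}, infinitely many digits different from $a_l - 1$, we can choose $l > n$ with $0 \ne r_l \ne a_l - 1$. Then for all large $k$, whichever of $r(k) - r$, $r - r(k)$ is the non-negative one has absolute value $< \varepsilon_l$, and Lemma~\ref{L:rul}(ii) gives $r(k)_i = r_i$ for all $i < l$, in particular for $i = n$.

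\emph{Case (ii).} If $r(k) \ge r$ for all $k$, pick $l > n$ such that $r_l \ne a_l - 1$ (possible by the normalization condition in \eqref{E:real}, which guarantees infinitely many such $l$). For large $k$ we have $0 \le r(k) - r < \varepsilon_l$, and the side condition ``$r_l \ne a_l - 1$'' of Lemma~\ref{L:rul}(ii) holds, so $r(k)_i = r_i$ for all $i < l$, in particular $r(k)_n = r_n$.

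\textbf{Expected main obstacle.} The only real subtlety is bookkeeping around the parenthetical side condition in Lemma~\ref{L:rul}(ii) and the direction of the inequality: one must be careful that the normalization in \eqref{E:real} — ``$r_n \ne a_n - 1$ for infinitely many $n$'' — is what lets us choose $l$ avoiding the bad value $a_l - 1$, and in case (i) that we can simultaneously arrange $r_l \ne 0$ and $r_l \ne a_l - 1$ (which requires noting that ``infinitely many non-zero digits'' plus ``infinitely many digits $\ne a_l - 1$'' forces infinitely many $l$ with both; this uses $a_l \ge 2$ so the two forbidden values $0$ and $a_l - 1$ do not exhaust all digits unless $a_l = 2$, in which case one argues slightly differently — but even for $a_l = 2$, infinitely many digits equal $1 \ne 0$ suffices to apply Lemma~\ref{L:rul}(ii) in the direction $r(k) \ge r$, which is exactly case (ii)'s argument, so case (i) with $a_l = 2$ reduces to handling the two orderings separately as above). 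This edge case is the one place where a careless write-up could go wrong, so I would treat the two inequality directions explicitly rather than claiming symmetry.
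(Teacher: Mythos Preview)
Your Case~(ii) argument is correct and is exactly what the paper does: pick $l>n$ with $r_l\ne a_l-1$ (infinitely many such $l$ exist by the normalization in \eqref{E:real}) and apply Lemma~\ref{L:rul}(ii).

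Your Case~(i), however, has a genuine gap. The central claim---that one can pick $l>n$ with $0\ne r_l\ne a_l-1$---is false in general. For instance, take $a_l=3$ for all $l$ and $r$ with $r_l=0$ for odd $l$, $r_l=2$ for even $l$: then $r$ has infinitely many non-zero digits and satisfies the normalization, yet every digit of $r$ is either $0$ or $a_l-1$. So your attempt to choose a single $l$ that works for both orderings cannot succeed. (Your parenthetical ``infinitely many non-zero plus infinitely many $\ne a_l-1$ forces infinitely many with both'' is simply not a valid inference about two infinite subsets of $\mathbb N$.) Your ``Expected main obstacle'' paragraph also has the two directions swapped: the condition $r_l\ne 0$ is what you need when $r(k)<r$ (there $s=r$ in Lemma~\ref{L:rul}(ii), and the side hypothesis becomes ``$r(k)_l\ne a_l-1$ or $r_l\ne 0$''), not when $r(k)\ge r$.

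The clean fix, which is what the paper does, is to prove (ii) first and then, for (i), reduce to the case $r(k)<r$ for all $k$ (the $k$'s with $r(k)\ge r$ are already handled by the argument for (ii)). Once $r(k)<r$, pick any $l>n$ with $r_l\ne 0$---infinitely many exist by assumption (i)---and apply Lemma~\ref{L:rul}(ii) with $s=r$; the side condition ``$s_l\ne 0$'' is then satisfied. This avoids the need for a single $l$ serving both inequalities and makes the $a_l=2$ edge case disappear entirely.
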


\begin{proof} 
%It is clear that under either one of the assumptions, we have $[r(k)]=[r]$ for large enough $k$. Thus, we may assume that $0\leq r, r(k)<1$ for all $k$. 
Under assumption (ii), for each 
$l$ with $r_l\not= a_l-1$, Lemma~\ref{L:rul}(ii) implies that, for large enough $k$, we have $r(k)_i=r_i$ for all $i<l$. Since there are infinitely many such $l$, the conclusion follows. 
Now, assume (i). By what we just proved, we can suppose that $r(k)<r$ for all $k$. Now, for each 
$l$ with $r_l\not=0$, Lemma~\ref{L:rul}(ii) implies that, for large enough $k$, we have $r(k)_i=r_i$ for all $i<l$. Since, by assumption (i), 
there are infinitely many such $l$, the conclusion follows. 
\end{proof}

For $r\in {\mathbb R_{\geq 0}}$ with the representation \eqref{E:real}, we let 
\begin{equation}\label{E:jdf}
j_{\vv a}(r) = \{ n\in {\mathbb N}\mid r_n\not= r_{n+1}\}, 
 \end{equation}
so $j_{\vv a}\colon {\mathbb R}_{\geq 0} \to 2^{\mathbb N}$. 
Lemma~\ref{L:conv digi} has an immediate corollary concering continuity properties of $j_{\vv a}$.

\begin{lemma} \label{L:contj}
 Let $\big( r(k)\big)_{k \in \mathbb N}$ be a sequence in $\mathbb R_{\geq 0}$ converging to $r\in {\mathbb R}_{\geq 0}$. 
 If either one of the following two conditions holds 
 \begin{enumerate}
     \item[(i)] $r$ has infinitely many non-zero digits,

     \item[(ii)] $r(k) \geq r$ for all $k$,
 \end{enumerate}
 then $j_{\vv a}(r(k)) \to j_{\vv a}(r) \hbox{ as } k \to \infty$.
\end{lemma}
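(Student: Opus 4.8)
The plan is to read off the statement from Lemma~\ref{L:conv digi} together with the description of the product topology on $2^{\mathbb N}$. Recall that $j_{\vv{a}}(r(k)) \to j_{\vv{a}}(r)$ in $2^{\mathbb N}$ means exactly that for every $n \in \mathbb N$ there is $K$ so that, for all $k \ge K$, the $n$-th coordinates agree, i.e.\ $n \in j_{\vv{a}}(r(k)) \iff n \in j_{\vv{a}}(r)$. So it is enough to fix $n$ and verify this eventual coordinatewise agreement.

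To do that, fix $n \in \mathbb N$. The hypotheses (i) and (ii) here are literally the hypotheses of Lemma~\ref{L:conv digi}, so that lemma applies and yields, for all large enough $k$, both $r(k)_n = r_n$ and $r(k)_{n+1} = r_{n+1}$ (apply it once at $n$ and once at $n+1$, then take $k$ past the larger of the two thresholds). For such $k$ we then have, directly from the definition \eqref{E:jdf} of $j_{\vv{a}}$,
\[
n \in j_{\vv{a}}(r(k)) \iff r(k)_n \ne r(k)_{n+1} \iff r_n \ne r_{n+1} \iff n \in j_{\vv{a}}(r).
\]
Since $n$ was arbitrary, this is precisely the asserted convergence.

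I expect no real obstacle: the lemma is a formal corollary of Lemma~\ref{L:conv digi}. The only point worth keeping in mind is that whether $n$ belongs to $j_{\vv{a}}(s)$ is decided by just the two digits $s_n$ and $s_{n+1}$, so digitwise convergence along the sequence forces convergence of the ``difference pattern'' that defines $j_{\vv{a}}$; and that convergence in $2^{\mathbb N}$ is nothing more than eventual agreement in each fixed coordinate.
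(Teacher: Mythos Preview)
Your proof is correct and follows exactly the approach the paper intends: the paper states this lemma as an immediate corollary of Lemma~\ref{L:conv digi} without writing out a proof, and you have simply spelled out that derivation. The only content is that membership of $n$ in $j_{\vv a}(s)$ depends only on $s_n$ and $s_{n+1}$, so digitwise stabilization from Lemma~\ref{L:conv digi} gives coordinatewise stabilization in $2^{\mathbb N}$.
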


The next lemma describes how the function $j_{\vv a}$ interacts with the algebraic operations of addition and subtraction. Recall $\ominus$ from \eqref{E:abm}.

\begin{lemma}\label{L:jalg} 
For $r,s\in {\mathbb R}_{\geq 0}$, $j_{\vv a}(r+s)$ and $j_{\vv a}(r \ominus s)$ are subsets of the union of the following three sets 
    \begin{equation}\label{E:thrs} 
    \begin{split} 
    &j_{\vv a}(r) \cup j_{\vv a}(s),\\  
    &\{n \mid n+1 \in j_{\vv a}(r) \cup j_{\vv a}(s)\},\\
    &\{n \mid a_n \neq a_{n+1}\} \cup \{n \mid a_{n+1} \neq a_{n+2}\}.
    \end{split}
    \end{equation} 
\end{lemma}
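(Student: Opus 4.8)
The plan is to analyze digit-by-digit how a carry can propagate when forming $r+s$ or $r\ominus s$ from the $\vv a$-representations of $r$ and $s$. Fix $n$ and suppose $n\notin j_{\vv a}(r)\cup j_{\vv a}(s)$, that $n+1\notin j_{\vv a}(r)\cup j_{\vv a}(s)$, and that $a_n=a_{n+1}$ and $a_{n+1}=a_{n+2}$; I must show $n\notin j_{\vv a}(r+s)$ and $n\notin j_{\vv a}(r\ominus s)$, i.e. that the $n$-th and $(n+1)$-st digits of $r+s$ (resp. $r\ominus s$) agree. Under these hypotheses we have $r_n=r_{n+1}$, $s_n=s_{n+1}$, and $a_n=a_{n+1}=a_{n+2}=:b$. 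The idea is that the digit of $r\pm s$ at position $m$ is determined by $r_m\pm s_m$ modulo $a_m$, adjusted by a ``carry'' $c_m\in\{-1,0,1\}$ coming from the tail $\sum_{i>m}$; the carry at position $m$ depends only on the digits $r_i, s_i, a_i$ for $i>m$ and lies in a bounded range, so the whole computation at positions $n$ and $n+1$ is governed by the pair $(r_m\pm s_m, c_{m+1})$.

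The first step is to set up this carry formalism cleanly. Writing $r = [r] + \sum_m r_m/(a_1\cdots a_m)$ and similarly for $s$, for the sum one has, for each $m$, a carry $c_{m}^{+}\in\{0,1\}$ such that the $m$-th digit of $r+s$ equals $(r_m + s_m + c_{m+1}^{+}) \bmod a_m$ and $c_m^{+}=1$ iff $r_m+s_m+c_{m+1}^{+}\ge a_m$; here $c_{m+1}^{+}$ is the carry generated by positions $>m$, which is well-defined since $r_i+s_i+1<2a_i$ always. For the difference $r\ominus s$ one does the analogous thing with a borrow $c_m^{-}\in\{-1,0,1\}$ (the sign of $r-s$ forces a uniform choice, but since we only compare two adjacent digits the two-sided range $\{-1,0,1\}$ suffices for the estimate). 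In both cases the key inputs at positions $n$ and $n+1$ are $r_n+ s_n = r_{n+1}+s_{n+1}$ (resp. $r_n - s_n = r_{n+1}-s_{n+1}$), the common modulus $b$, and the incoming carries $c_{n+1}^{\pm}$ and $c_{n+2}^{\pm}$.

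The second step is the case analysis showing the digits at $n$ and $n+1$ coincide. Let $t = r_n\pm s_n = r_{n+1}\pm s_{n+1}$. The $(n+1)$-st digit is $(t + c_{n+2})\bmod b$ and the $n$-th digit is $(t + c_{n+1})\bmod b$, where $c_{n+1}$ itself is determined by whether $t + c_{n+2}$ overflows/underflows mod $b$: for the sum, $c_{n+1}=1$ iff $t+c_{n+2}\ge b$, so if $t+c_{n+2}<b$ then $c_{n+1}=0$ and both digits equal $t+c_{n+2}$; if $t+c_{n+2}\ge b$ then the $(n+1)$-st digit is $t+c_{n+2}-b$, $c_{n+1}=1$, and the $n$-th digit is $(t+1)\bmod b$ — here one uses $0\le t\le 2b-2$ to check $(t+1)\bmod b = t+c_{n+2}-b$ in the relevant subcases. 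The subtraction case is symmetric. This is the routine-calculation part and I would not spell out every subcase. Finally, the contrapositive gives exactly the assertion that $j_{\vv a}(r+s)$ and $j_{\vv a}(r\ominus s)$ are contained in the union of the three sets in \eqref{E:thrs}.

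I expect the main obstacle to be bookkeeping rather than conceptual: making the carry/borrow definition rigorous in the presence of the ``no digit is eventually $a_m-1$'' normalization (so that carries are genuinely well-defined and unique), and handling the difference $r\ominus s$ uniformly without splitting on the sign of $r-s$. Once the carry at a position is pinned down to lie in $\{-1,0,1\}$ and shown to depend only on strictly later positions, the adjacency argument at $n, n+1$ is short.
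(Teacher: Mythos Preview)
Your approach is essentially the paper's: proceed by contrapositive and do a direct digit-with-carry computation (the paper simply writes ``by a direct computation of the digits'' and leaves the details). However, your case analysis has a gap. You correctly set up the hypotheses $r_n=r_{n+1}=r_{n+2}$, $s_n=s_{n+1}=s_{n+2}$, $a_n=a_{n+1}=a_{n+2}=:b$, but in the carry analysis you only use the equalities at positions $n$ and $n+1$ and treat $c_{n+2}$ as an arbitrary incoming carry. With that reading your first case is false: if $t+c_{n+2}<b$ then $c_{n+1}=0$ and the $n$-th digit is $(t+0)\bmod b=t$, not $t+c_{n+2}$; this equals the $(n+1)$-st digit $t+c_{n+2}$ only when $c_{n+2}=0$. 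The same issue arises in your second case, where ``$(t+1)\bmod b=t+c_{n+2}-b$'' forces $c_{n+2}=1$, which you have not established from $0\le t\le 2b-2$ alone.

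The fix is to use the hypothesis at position $n+2$: since $r_{n+2}+s_{n+2}=t$ and $a_{n+2}=b$ as well, the same recursion gives $c_{n+2}=[t+c_{n+3}\ge b]$ alongside $c_{n+1}=[t+c_{n+2}\ge b]$. A short check over the three ranges $t\le b-2$, $t=b-1$, $t\ge b$ then shows $c_{n+1}=c_{n+2}$ regardless of $c_{n+3}\in\{0,1\}$, after which the $n$-th and $(n+1)$-st digits visibly coincide. The subtraction case is handled the same way. Once you insert this one observation, the argument is correct.
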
 

\begin{proof} 
%The proof is similar to the proof of Claim 2 in \cite[Theorem 5]{So2}.  
Suppose $n$ is not in any of the sets in \eqref{E:thrs}, that is, 
$r_n = r_{n+1} = r_{n+2}$, $s_n = s_{n+1} = s_{n+2}$ and $a_n = a_{n+1} = a_{n+2}$. By a direct computation of the digits of $r+s$ and $r \ominus s$ using the digits of $r$ and $s$, 
we see that $(r + s)_n = (r + s)_{n+1}$ and $(r \ominus s)_n = (r \ominus s)_{n+1}$, so $n \notin j_{\vv a}(r + s)$ and $n \notin j_{\vv a}(r \ominus s)$. 
%Some extra care is needed for $r \ominus s$, where we may subtract a larger digit from a smaller digit thus borrowing from the previous digit is exercised. 
\end{proof}

\subsubsection{Subrings of $\mathbb Q$}

Given a base sequence $\vv a$, we define a subring ${\mathbb Q}_{\vv a}$ of $\mathbb Q$. 
First, however, we make general comments on subrings of $\mathbb Q$. For an arbitrary set $P$ of primes, define 
\[
P^{-1}{\mathbb Z}= \{ \frac{k}{l}\mid k\in {\mathbb Z},\, l\in {\mathbb N}, \hbox{ and, } \hbox{for each prime }p, \hbox{ if } p\mid l, \hbox{ then } p\in P\}. 
\]
Note that $P^{-1} {\mathbb Z}$ is a subring of ${\mathbb Q}$. The following lemma is surely well known. We give an argument justifying it for the convenience of the reader. 

\begin{lemma}\label{L:subr} 
Each subring of $\mathbb Q$ is of the form $P^{-1}{\mathbb Z}$, for a suitable set $P$ of primes. 
\end{lemma}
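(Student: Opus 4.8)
The plan is to show that a subring $R \subseteq \mathbb{Q}$ is determined by which unit fractions $1/p$, for $p$ prime, belong to $R$. First I would set
\[
P = \{ p \hbox{ prime} \mid \tfrac{1}{p} \in R\}
\]
and prove $R = P^{-1}\mathbb{Z}$ by two inclusions. Since $1 \in R$ and $R$ is closed under addition and additive inverses, $\mathbb{Z} \subseteq R$; and if $p \in P$, then $1/p \in R$, so every power $1/p^m = (1/p)^m \in R$ because $R$ is closed under multiplication. Hence for any $k \in \mathbb{Z}$ and any $l \in \mathbb{N}$ all of whose prime divisors lie in $P$, writing $l = p_1^{e_1}\cdots p_t^{e_t}$ we get $k/l = k \cdot \prod_i (1/p_i)^{e_i} \in R$, giving $P^{-1}\mathbb{Z} \subseteq R$.

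For the reverse inclusion $R \subseteq P^{-1}\mathbb{Z}$, I would take an arbitrary element of $R$ and write it in lowest terms as $k/l$ with $\gcd(k,l) = 1$ and $l \in \mathbb{N}$. The goal is to show every prime divisor $p$ of $l$ lies in $P$, i.e. that $1/p \in R$. Since $\gcd(k,l)=1$, for each prime $p \mid l$ there are integers $u, v$ with $u k + v (l/p) = 1$ by Bézout (as $\gcd(k, l/p)$ divides $\gcd(k,l) = 1$). Then
\[
\frac{1}{p} = u\cdot \frac{k}{l} + v\cdot \frac{l/p}{l} = u \cdot \frac{k}{l} + v \cdot \frac{1}{p},
\]
wait — more cleanly: $\tfrac1p = \tfrac{uk + v(l/p)}{l} \cdot$ ... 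I would instead argue directly that $u \cdot \tfrac{k}{l} + v \cdot \tfrac{l/p}{l}$ ... Let me restate: from $uk + v(l/p) = 1$, dividing by $l$ gives $\tfrac1l = u\tfrac{k}{l}\cdot\tfrac1k$ ... The correct manipulation is: $\tfrac{1}{p} = \tfrac{l/p}{l}$, and $\tfrac1l = \tfrac{uk+v(l/p)}{l} = u\tfrac kl + v\tfrac{l/p}{l} = u\tfrac kl + v\tfrac1p$, so $\tfrac1p = p(\tfrac1l) = pu\tfrac kl + pv\tfrac1p$; a cleaner route is to multiply $uk+v(l/p)=1$ through by $\tfrac1l$ to obtain $u\tfrac kl = \tfrac1l - v\tfrac1p$, which shows $\tfrac1l \in R$ once we know $\tfrac1p\in R$, so this needs organizing: the key point is $\gcd(k,l)=1 \Rightarrow \tfrac1l \in \mathbb{Z}[\tfrac kl] \subseteq R$ by Bézout applied to $k$ and $l$ directly, giving $ak+bl = 1$ hence $\tfrac1l = a\tfrac kl + b \in R$; and then $\tfrac1p = (l/p)\cdot\tfrac1l \in R$ for each $p \mid l$, so $p \in P$.

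So the streamlined argument for $R \subseteq P^{-1}\mathbb{Z}$ is: given $k/l \in R$ in lowest terms, Bézout gives $ak + bl = 1$, whence $\tfrac1l = a\tfrac kl + b \in R$ since $\mathbb{Z}\subseteq R$ and $R$ is a ring; then for each prime $p \mid l$, $\tfrac1p = (l/p)\cdot \tfrac1l \in R$, so $p \in P$; therefore $l$ has all its prime factors in $P$ and $k/l \in P^{-1}\mathbb{Z}$. I expect the main (minor) obstacle is just bookkeeping the lowest-terms normalization and the Bézout step cleanly; there is no deep difficulty here, and $P$ being uniquely determined (namely $P = \{p : 1/p \in R\}$) falls out of the two inclusions, since $P^{-1}\mathbb{Z}$ contains $1/p$ exactly for $p \in P$.
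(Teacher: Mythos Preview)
Your streamlined argument at the end is correct and follows essentially the same approach as the paper: both define $P=\{p:1/p\in R\}$ and use B\'ezout's identity together with integer multiplication to show that any $k/l\in R$ in lowest terms forces every prime divisor of $l$ into $P$. The only cosmetic difference is the order of operations---the paper multiplies $m/n$ by the integer $n/p$ to get $m/p\in R$ and then applies B\'ezout to $m$ and $p$, whereas you apply B\'ezout to $k$ and $l$ first to get $1/l\in R$ and then multiply by the integer $l/p$; you should, however, excise the false starts in the middle paragraph and present only the clean version.
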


\begin{proof} 
Let $R$ be a subring of $\mathbb Q$. It suffices to show that if $m\in {\mathbb Z}$ and $n\in {\mathbb N}$ are relatively prime, 
$m/n\in R$, and a prime $p$ divides $n$, then 
$1/p\in R$. To see this, note that $n/p$ is in $\mathbb N$, and, 
by adding $m/n$ to itself $n/p$ many times, we get $m/p\in R$. Since $p\nmid m$, there are $a,b\in {\mathbb Z}$ such that $1= ap+bm$. So we have 
\[
\frac{1}{p} = \frac{ap+bm}{p} = a + b \,\frac{m}{p}, 
\]
from which we get $1/p\in R$, as required. 
\end{proof} 

Note that if we take $P=\emptyset$ in the lemma above, then $P^{-1} {\mathbb Z} = {\mathbb Z}$; if $P$ is the set of all primes, then 
$P^{-1}{\mathbb Z}= {\mathbb Q}$. Furthermore, each subring of $\mathbb Q$ is Noetherian as each ideal in $P^{-1} {\mathbb Z}$ is of the form  
\[
n\cdot \big( P^{-1} {\mathbb Z}\big), 
\]
where $n=0$ or $n$ is a positive integer such that $p\nmid n$ for each $p\in P$; see \cite[Proposition~3.11]{AM}. (We thank Patrick Allen for this reference.) 

For a base sequence $\vv a$, let 
\[
{\rm pr}({\vv{a}}) = \{ p \mid \; p \hbox{ a prime and } p\mid a_n \hbox{ for all but finitely many }n\}.
\]
Define 
\[
{\mathbb Q}_{\vv a} = \big( {\rm pr}({\vv a})^{-1}\big) {\mathbb Z}.
\]
We have the following immediate consequence of Lemma~\ref{L:subr}. 

\begin{proposition}
Each subring of $\mathbb Q$ is of the form ${\mathbb Q}_{\vv a}$ for some base sequence $\vv a$. 
\end{proposition}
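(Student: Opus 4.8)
The plan is to use Lemma~\ref{L:subr} together with a direct construction of a base sequence $\vv a$ realizing a prescribed set of primes. Given a subring $R$ of $\mathbb Q$, Lemma~\ref{L:subr} supplies a set $P$ of primes with $R = P^{-1}{\mathbb Z}$, so it suffices to produce a base sequence $\vv a$ with ${\rm pr}(\vv a) = P$; then $\mathbb Q_{\vv a} = (P^{-1})\mathbb Z = R$.

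First I would dispose of the degenerate cases. If $P = \emptyset$, take the constant sequence $a_n = 2$ for all $n$; no prime divides $a_n$ for all but finitely many $n$ (indeed $2\nmid$ nothing else, but $2 \mid a_n$ for \emph{all} $n$, so actually ${\rm pr}(\vv a) = \{2\}$), so this needs a small fix: instead take $a_n$ to be the $n$-th prime, or any sequence in which every prime divides only finitely many terms, giving ${\rm pr}(\vv a) = \emptyset$. More care is needed in general, so I would treat the general construction uniformly.

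For the general case, enumerate $P = \{p_1, p_2, \dots\}$ (finite or infinite). The idea is to let the ``eventual common divisor'' of the $a_n$ grow through the $p_i$: set $Q_k = p_1 p_2 \cdots p_k$ (with $Q_0 = 1$), and define $a_n = 2\,Q_k$ for $n$ in a suitable block, choosing the blocks so that each $Q_k$ is used for all $n$ past some point up to the next block, i.e. $a_n = 2Q_n$ works directly when $P$ is infinite. Then each $a_n \geq 2$, and for a prime $p$: if $p = p_i \in P$, then $p \mid Q_n$ for all $n \geq i$, hence $p \mid a_n$ for all but finitely many $n$, so $p \in {\rm pr}(\vv a)$; conversely if $p \notin P$, then $p \nmid 2Q_n$ for every $n$ unless $p = 2$, so one must ensure $2 \in P$ is handled — if $2 \notin P$, replace the factor $2$ by $3$ (or the least prime not in $P$, which exists only if $P$ omits some prime; if $P$ is all primes use $a_n = n+2$ style... ). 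When $P$ is finite, say $P = \{p_1,\dots,p_m\}$, take $a_n = c\,p_1\cdots p_m$ eventually, where $c \geq 2$ is chosen with no prime factor outside $P$ dividing infinitely many terms — simplest is to let $a_n$ run through all values $2,3,4,5,\dots$ multiplied by $p_1\cdots p_m$, so that every prime $q \notin P$ divides $a_n$ for infinitely many but not cofinitely many $n$. Verifying ${\rm pr}(\vv a) = P$ is then a routine check of the two inclusions from the definition of ${\rm pr}$.

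The main obstacle is the bookkeeping at the boundary: making sure that (a) every prime in $P$ divides $a_n$ for \emph{all but finitely many} $n$ (so it must eventually appear in every term), while (b) no prime outside $P$ divides $a_n$ for all but finitely many $n$ (so primes outside $P$ may appear, but must be ``refreshed away'' infinitely often), and simultaneously (c) $a_n \geq 2$ throughout. The cleanest resolution, which I would adopt, is: if $P$ is the set of \emph{all} primes, take $a_n = p_n$ where $p_n$ is the $n$-th prime — wait, that gives ${\rm pr} = \emptyset$; instead take $a_n$ to be an enumeration that lists each value $\geq 2$ with growing prefixes, concretely $a_n = n!+ \text{something}$... the robust choice is $a_n = \prod_{p \in P,\, p \leq n} p$ times a correction factor $e_n \in \{2,3\}$ ensuring $a_n \geq 2$ and that the correction prime cycles. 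I expect writing this down precisely and checking ${\rm pr}(\vv a) = P$ in all three regimes ($P$ empty, finite nonempty, infinite) to be the only real work, and it is elementary.
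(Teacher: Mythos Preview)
Your reduction via Lemma~\ref{L:subr} is exactly the paper's approach; in fact the paper records the proposition as an immediate consequence of that lemma and supplies no further argument. Your construction of $\vv a$ is more tangled than necessary (the spurious constant factor of $2$ is what keeps tripping you up): for $P$ nonempty simply take $a_n = p_1 p_2 \cdots p_n$ where $p_1, p_2, \dots$ enumerates $P$ (cycling if $P$ is finite), and for $P = \emptyset$ take $a_n$ to be the $n$-th prime --- in either case one checks ${\rm pr}(\vv a)=P$ in one line.
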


\subsection{Ideals of subsets of $\mathbb N$}\label{Su:id} 

Two properties of ideals of subsets of $\mathbb N$ will be important in the sequel: translation invariance and being an analytic P-ideal.

An ideal $I$ is call {\bf translation invariant} if, for each $x\in I$, we have 
\[
\{ n+1\mid n\in x\}\in I\;\hbox{ and }\; \{ n\mid n+1\in x\} \in I.
\]

Before we talk about analytic P-ideals, we need to introduce appropriate submeasures. 
By a {\bf lower semicontinuous} ({\bf lsc}) {\bf submeasure} we understand a lower semicontinuous function $\phi\colon 2^{\mathbb N}\to [0, \infty]$ such that 
$\phi(x)\leq \phi(y)$ if $x\subseteq y$ and $\phi(x\cup y)\leq \phi(x)+\phi(y)$ for all $x,y$. In this paper, we always assume that $0<\phi(\{ n\})<\infty$ for each $n\in {\mathbb N}$. 
With each lsc submeasure we associate an ideal 
\[
{\rm Exh}(\phi) = \{ x\subseteq {\mathbb N}\mid \phi(x\setminus \{1, \dots, n\})\to 0, \hbox{ as }n\to \infty\}. 
\]

An ideal $I$ is an {\bf analytic P-ideal} 
if it is analytic as a subsets of $2^{\mathbb N}$ and has the following property: 
for each sequence $(x_n)$ of elements of $I$, there exists $y\in I$ such that $x_n\setminus y$ is finite for each $n$. 
By \cite[Theorem~3.1]{So1}, an ideal $I$ is an analytic P-ideal if and only if it is of the form 
\[
I = {\rm Exh}(\phi), 
\]
for some lsc submeasure $\phi$. 

We have the following lemma on basic properties of ideals of the form 
${\rm Exh}(\phi)$. All of these properties have been noticed in the literature. We recall them here for the reader's convenience.

\begin{lemma}\label{L:phs} 
Let $\phi$ be a lsc submeasure. Set $I= {\rm Exh}(\phi)$. 
\begin{enumerate}
\item[(i)] If $\psi$ is a lsc submeasure with $I= {\rm Exh}(\psi)$, then, for each $\epsilon>0$, there exists $\delta>0$ such that if 
$x\subseteq {\mathbb N}$ and $\phi(x)<\delta$, then $\psi(x)<\epsilon$. 

\item[(ii)] For each $\epsilon>0$, there exists $\delta>0$ such that if 
$x, y\subseteq {\mathbb N}$ and $\phi(x), \phi(y)<\delta$, then $\phi(x\cup y)<\epsilon$. 

\item[(iii)] If $I$ is translation invariant, then, for each $\epsilon>0$, there exists $\delta>0$ such that if 
$x\subseteq {\mathbb N}$ and $\phi(x)<\delta$, then 
\[
\phi\big(\{ n\in {\mathbb N}\mid n+1\in x\}\big)<\epsilon\;\hbox{ and }\; \phi\big(\{ n+1\in {\mathbb N}\mid n\in x\}\big)<\epsilon. 
\]
\end{enumerate} 
\end{lemma}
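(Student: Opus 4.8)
The plan is that part (ii) needs no argument at all: by subadditivity of $\phi$, if $\phi(x),\phi(y)<\epsilon/2$ then $\phi(x\cup y)\le\phi(x)+\phi(y)<\epsilon$, so $\delta=\epsilon/2$ works. Part (iii) I intend to deduce from part (i) by applying (i) to $\phi$ together with a suitable auxiliary lsc submeasure built from $\phi$ and the two unit shifts. So the real content is (i), which I would prove first.

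For (i), I would argue by contradiction. Suppose the conclusion fails for some $\epsilon>0$: for every $n$ there is $x_n\subseteq{\mathbb N}$ with $\phi(x_n)<2^{-n}$ but $\psi(x_n)\ge\epsilon$. First, since $\psi$ is lower semicontinuous, $\psi(x_n)=\sup_m\psi(x_n\cap\{1,\dots,m\})$, so I may replace each $x_n$ by a finite subset, retaining $\phi(x_n)<2^{-n}$ by monotonicity and retaining $\psi(x_n)$ above a fixed positive constant, which I again call $\epsilon$. The key step is then to arrange that the sets $x_n$ escape to infinity. Using compactness of $2^{\mathbb N}$, pass to a subsequence with $x_{n_k}\to x_\infty$; lower semicontinuity of $\phi$ gives $\phi(x_\infty)\le\liminf_k\phi(x_{n_k})=0$, hence $\phi(x_\infty\cap\{1,\dots,j\})=0$ for every $j$, and since $\phi(\{i\})>0$ for all $i$ this forces $x_\infty\cap\{1,\dots,j\}=\emptyset$ for all $j$, i.e.\ $x_\infty=\emptyset$; therefore $\min x_{n_k}\to\infty$. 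After relabelling and one further extraction I may assume the $x_n$ are finite with $\phi(x_n)<2^{-n}$, $\psi(x_n)\ge\epsilon$, and $\max x_n<\min x_{n+1}$ for all $n$. Now I would set $y=\bigcup_n x_n$, so $\phi(y)\le\sum_n 2^{-n}<\infty$; given $m$ and letting $K$ be least with $\min x_K>m$, the block condition gives $x_1\cup\cdots\cup x_{K-2}\subseteq\{1,\dots,m\}$, whence $y\setminus\{1,\dots,m\}\subseteq\bigcup_{n\ge K-1}x_n$ and $\phi(y\setminus\{1,\dots,m\})\le 2^{-K+2}$; since $K=K(m)\to\infty$, this shows $y\in{\rm Exh}(\phi)$. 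On the other hand, for each $m$ I can pick $n$ with $\min x_n>m$, so that $x_n\subseteq y\setminus\{1,\dots,m\}$ and $\psi(y\setminus\{1,\dots,m\})\ge\psi(x_n)\ge\epsilon$; hence $y\notin{\rm Exh}(\psi)$, contradicting ${\rm Exh}(\phi)={\rm Exh}(\psi)$. The escape-to-infinity step is the crux of the whole lemma; the remaining parts are routine bookkeeping with subadditivity and with tails of exhaustive sets, and I would not grind through them in detail.

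For (iii), assume $I={\rm Exh}(\phi)$ is translation invariant and, for $x\subseteq{\mathbb N}$, write $x^-=\{n\mid n+1\in x\}$ and $x^+=\{n+1\mid n\in x\}$; the maps $x\mapsto x^-$ and $x\mapsto x^+$ are continuous on $2^{\mathbb N}$, are monotone, and satisfy $(x\cup y)^-=x^-\cup y^-$ and $(x\cup y)^+=x^+\cup y^+$. Define $\psi(x)=\phi(x)+\phi(x^-)+\phi(x^+)$. Then $\psi$ is an lsc submeasure with $0<\psi(\{n\})<\infty$ for all $n$ (lower semicontinuity because $\phi$ is lsc and the shifts are continuous; subadditivity and monotonicity from the corresponding properties of $\phi$ and of the shifts). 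Clearly $\psi\ge\phi$, so ${\rm Exh}(\psi)\subseteq{\rm Exh}(\phi)$. Conversely, if $x\in I$ then $x^-,x^+\in I$ by translation invariance, and using the identities $(x\setminus\{1,\dots,m\})^-=x^-\setminus\{1,\dots,m-1\}$ and $(x\setminus\{1,\dots,m\})^+=x^+\setminus\{1,\dots,m+1\}$ one checks that $\psi(x\setminus\{1,\dots,m\})\to 0$, so $x\in{\rm Exh}(\psi)$. Hence ${\rm Exh}(\psi)={\rm Exh}(\phi)=I$, and applying part (i) to the pair $\phi,\psi$ gives, for each $\epsilon>0$, a $\delta>0$ with $\phi(x)<\delta\Rightarrow\psi(x)<\epsilon$; since $\psi(x)\ge\phi(x^-)$ and $\psi(x)\ge\phi(x^+)$, this is exactly the conclusion of (iii).
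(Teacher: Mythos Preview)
Your proof is correct. For (i) you run exactly the contradiction scheme the paper has in mind: the paper's sketch isolates the two key facts (that $\phi(x_n)\to 0$ forces $\min x_n\to\infty$, since $\phi(\{k\})>0$; and that $\sum_n\phi(x_n)<\infty$ forces $\bigcup_n x_n\in I$, by lower semicontinuity), and your argument is precisely the way one assembles these into a proof of (i). Your observation that (ii) is immediate from subadditivity with $\delta=\epsilon/2$ is correct and simpler than the paper's uniform ``follows from the two facts'' phrasing.

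The one place you organize things differently is (iii): the paper would rerun the same contradiction argument directly (take $x_n$ with $\phi(x_n)<2^{-n}$ and $\phi(x_n^{-})\ge\epsilon$, form $y=\bigcup_n x_n\in I$, use translation invariance to get $y^{-}\in I$, and contradict $\phi(y^{-}\setminus\{1,\dots,m\})\ge\epsilon$). Your reduction to (i) via the auxiliary lsc submeasure $\psi(x)=\phi(x)+\phi(x^{-})+\phi(x^{+})$ is a tidier packaging of the same content: it avoids repeating the contradiction and makes transparent that (iii) is really (i) applied to a shifted submeasure. Both approaches are short; yours has the small advantage of making the dependence on (i) explicit.
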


\begin{proof} 
All three points are consequences of the following statements. For $\emptyset \not= x_n\subseteq {\mathbb N}$, $n\in {\mathbb N}$, 
\begin{enumerate} 
\item[---] if $\phi(x_n)\to 0$ as $n\to \infty$, then $\min_n x_n\to \infty$ as $n\to \infty$; 

\item[---] if $\sum_n \phi(x_n)<\infty$, then $\bigcup_n x_n\in I$.
\end{enumerate} 
The first one of these statements follows from our assumption that $\phi(\{ k\})>0$ for each $k\in {\mathbb N}$, while the second one 
is a consequence of semicontinuity of $\phi$. 
\end{proof}

\subsection{The module associated with a base sequence and an ideal}\label{Su:pom}

Let $\vv a$ be a base sequence, which we fix for the duration of this section. Recall the definition \eqref{E:jdf} of $j_{\vv a}$, and set 
\[
j= j_{\vv a}.
\]
We also fix an ideal $I$, to which the base sequence $\vv a$ is adapted.
Define the following subset of $\mathbb R$ 
\begin{equation}\label{E:hh}
H= \{ r\mid  r\in {\mathbb R}_{\geq 0} \hbox{ and } j(r)\in I\}\cup \{ -r\mid r \in {\mathbb R}_{\geq 0} \hbox{ and } j(r)\in I\}. 
\end{equation} 
The set above will be the underlying subset of the Polish module we will define below. 

We can immediately point out a complexity estimate on the set $H$ that will be useful later on. 

\begin{lemma}\label{L:hcomp} 
If $I$ is an $F_\sigma$ ideal, then $H$ is an $F_\sigma$ subset of $\mathbb R$. 
\end{lemma}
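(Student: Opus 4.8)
\textbf{Proof plan for Lemma~\ref{L:hcomp}.}
The plan is to write $H$ as a countable union of closed sets by stratifying according to which finite initial segments of digits are ``allowed'' and exploiting that $I$ is $F_\sigma$. First I would observe that it suffices to show that $H_{\geq 0} = \{ r \in {\mathbb R}_{\geq 0} \mid j(r) \in I\}$ is $F_\sigma$ in ${\mathbb R}_{\geq 0}$: indeed $H = H_{\geq 0} \cup (-H_{\geq 0})$, and multiplication by $-1$ is a homeomorphism of $\mathbb R$, so the union of two $F_\sigma$ sets is $F_\sigma$; also ${\mathbb R}_{\geq 0}$ is closed in $\mathbb R$, so $F_\sigma$ in ${\mathbb R}_{\geq 0}$ is the same as $F_\sigma$ in $\mathbb R$.

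Next, since $I$ is $F_\sigma$, write $I = \bigcup_m C_m$ with each $C_m \subseteq 2^{\mathbb N}$ closed in the compact space $2^{\mathbb N}$, and without loss of generality each $C_m$ is downward closed under $\subseteq$ and closed under finite unions with singletons (replace $C_m$ by the set of subsets of members of $C_1 \cup \cdots \cup C_m$ together with the finitely many singletons $\{1\},\dots,\{m\}$, which is still closed and still contained in $I$ because $I$ is an ideal containing all singletons; an increasing union of such sets still has union $I$). Then $r \in H_{\geq 0}$ iff $j(r) \in C_m$ for some $m$, so $H_{\geq 0} = \bigcup_m \{ r \in {\mathbb R}_{\geq 0} \mid j(r) \in C_m\}$, and it remains to check each set $A_m := \{ r \in {\mathbb R}_{\geq 0} \mid j(r) \in C_m\}$ is $F_\sigma$. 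Here the obstacle is that $j_{\vv a}$ is \emph{not} continuous on all of ${\mathbb R}_{\geq 0}$ (Lemma~\ref{L:contj} only gives continuity at points with infinitely many nonzero digits, or along monotone sequences), so I cannot simply say $A_m = j^{-1}(C_m)$ is closed. Instead I would further stratify: for each integer $N$ and each finite string $\sigma = (\sigma_1,\dots,\sigma_N)$ with $0 \le \sigma_i < a_i$, let $B_{N,\sigma}$ be the set of $r \ge 0$ whose integer part equals a fixed bound and whose first $N$ digits are exactly $\sigma$; on $B_{N,\sigma}$ the membership $j(r) \in C_m$ depends, through the ``tail'' behaviour, on digits beyond $N$, but the key point is that we can split ${\mathbb R}_{\geq 0}$ into the countably many sets of the form $[k, k + \frac{1}{a_1\cdots a_N})$ for $k \in {\mathbb Z}_{\geq 0}, N \in {\mathbb N}$, on the closure of each of which (by Lemma~\ref{L:rul}) the first $N-1$ digits are locally constant; combined with Lemma~\ref{L:conv digi}(ii), a monotone-limit argument shows $j$ restricted to a suitable half-open ``digit cylinder'' is continuous ``from one side,'' so that the preimage of the closed set $C_m$ under that restriction is a relatively closed subset of the cylinder, hence $F_\sigma$ in $\mathbb R$, and $A_m$ is the countable union of these.

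Concretely, the cleanest route I expect to work: cover ${\mathbb R}_{\geq 0}$ by the countably many half-open intervals $J = [k, k+\tfrac{1}{a_1\cdots a_N})$ and note that for $r \in J$ the digits $r_1,\dots,r_{N-1}$ are constant by Lemma~\ref{L:rul}(i)--(ii), so on $\bar J$ the map $r \mapsto (r_1,\dots,r_{N-1})$ is constant; thus on $\bar J$ whether $n \in j(r)$ for $n \le N-2$ is a fixed condition, while for the tail one uses that $j(r) \cap \{1,\dots,N-2\}$ is determined and that $C_m$ being closed and downward closed makes $\{x : x \in C_m\}$ depend on $x$ in an upper-semicontinuous way. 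I would then assemble: $A_m \cap \bar J$ equals a \emph{closed} subset of $\bar J$ (using lower semicontinuity of the ``digits agree'' relation along monotone sequences from Lemma~\ref{L:conv digi}(ii)), hence $A_m$ is a countable union of closed sets, hence $H_{\geq 0}$, and therefore $H$, is $F_\sigma$. The main obstacle, as flagged, is handling the discontinuity of $j_{\vv a}$ at the ``bad'' points (those with only finitely many nonzero digits — i.e.\ eventually-$0$ tails, equivalently the $a_1\cdots a_N$-adic rationals); the device for getting around it is exactly the half-open interval decomposition together with the one-sided continuity in Lemma~\ref{L:conv digi}(ii) and the fact that we arranged each $C_m$ to be downward closed, so that $r \in A_m$ is a ``closed, monotone-limit-stable'' condition on each piece.
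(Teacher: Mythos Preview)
Your reduction to $H_{\geq 0}$ and the decomposition $I = \bigcup_m C_m$ with $C_m$ closed are fine, and you correctly identify the discontinuity of $j_{\vv a}$ at points with finitely many nonzero digits as the obstacle. However, the proposed workaround---cylinder stratification together with one-sided continuity (Lemma~\ref{L:conv digi}(ii)) and downward closedness of $C_m$---does not close the gap. The key claim that $A_m \cap \bar J$ is closed in $\bar J$ is false, even for cylinders $J$ of arbitrary depth. Take $a_n = 3$ for all $n$, let $r = 5/27$ (base-$3$ expansion $0.012000\ldots$, so $j(r) = \{1,2,3\}$), and let $r(k) = r - 3^{-k}$ for $k \geq 4$ (base-$3$ expansion $0.0112\cdots 200\ldots$ with $k-3$ twos, so $j(r(k)) = \{1,3,k\}$). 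Both $r$ and all $r(k)$ lie in the depth-$2$ cylinder $\{s : s_1 = 0,\, s_2 = 1\}$, and $r(k) \to r$. With $C_m = \{x \subseteq \mathbb N : 2 \notin x\}$ (closed and downward closed), each $r(k)$ lies in $A_m$ but $r$ does not. The point is that when $r(k) \to r$ \emph{from below} to a point with finitely many nonzero digits, $j(r)$ need not be contained in $j(r(k))$, so downward closedness of $C_m$ gives you nothing; and such $r$ occur throughout the interior of every cylinder, not just at endpoints.

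The paper's proof is both simpler and avoids this trap entirely. Rather than trying to make each $j^{-1}(C_m)$ an $F_\sigma$, it exploits two observations you came close to but did not use: (a) the set ${\mathbb R}_{\geq 0} \setminus {\mathbb R}_\infty$ of points with finitely many nonzero digits is \emph{countable}, and (b) every such point $r$ has $j(r)$ finite, hence $j(r) \in I$, so these points are automatically in $H_{\geq 0}$. One then sets $H_n = \overline{j^{-1}(F_n)}$ (ordinary closure in ${\mathbb R}_{\geq 0}$); by Lemma~\ref{L:contj}(i), $H_n \cap {\mathbb R}_\infty \subseteq j^{-1}(F_n)$, so $j^{-1}(I) = \big(\bigcup_n H_n\big) \cup \big({\mathbb R}_{\geq 0} \setminus {\mathbb R}_\infty\big)$, a countable union of closed sets together with a countable set. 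No cylinder stratification, one-sided continuity, or reshaping of the $C_m$'s is needed.
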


\begin{proof} Set 
\[
{\mathbb R}_\infty = \{ r\in {\mathbb R}_{\geq 0}\mid r \hbox{ has infinitely many non-zero digits}\}. 
\]

Obviously, it will suffice to show that the set 
\[
j^{-1}(I) = \{ r\in {\mathbb R}_{\geq 0} \mid j(r)\in I\}
\]
is $F_\sigma$. Since $I$ is an $F_\sigma$ ideal, we have that 
\begin{equation}\notag
I= \bigcup_n F_n
\end{equation} 
with $F_n\subseteq 2^{\mathbb N}$ closed. Let $H_n$ be the closure in ${\mathbb R}_{\geq 0}$ of $j^{-1}(F_n)$. 
Directly from Lemma~\ref{L:contj}(i), we get 
\[
H_n\cap {\mathbb R}_\infty \subseteq j^{-1}(F_n). 
\]
It follows from this inclusion that 
\[
j^{-1}(I)\cap R_{\infty} =  \big( \bigcup_n j^{-1}(F_n)\big) \cap {\mathbb R}_\infty \subseteq (\bigcup_n H_n)\cap {\mathbb R}_\infty \subseteq \bigcup_n j^{-1}(F_n) = j^{-1}(I). 
\]
Thus, since ${\mathbb R}_{\geq 0}\setminus {\mathbb R}_\infty \subseteq j^{-1}(I)$ (as $j(r)$ is finite for each $r\in {\mathbb R}_{\geq 0}\setminus {\mathbb R}_\infty$), we have 
\[
j^{-1}(I)= \big( \bigcup_n H_n\big) \cup \big( {\mathbb R}_{\geq 0}\setminus {\mathbb R}_\infty\big). 
\]
Since ${\mathbb R}_{\geq 0}\setminus {\mathbb R}_\infty$ is countable and each $H_n$ is closed, we see that $j^{-1}(I)$ is $F_\sigma$. 
\end{proof}

\subsubsection{Algebraic operations on $H$} 

In this section, we assume that the ideal $I$ is translation invariant.

\begin{lemma}\label{L:base1}
\begin{enumerate} 
\item[(i)] $H$ is a subgroup of $\mathbb R$ taken with addition $+$. 

\item[(ii)] $H$ is closed under the multiplication by elements of ${\mathbb Q}_{\vv a}$.
\end{enumerate}
\end{lemma}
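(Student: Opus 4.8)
The goal is to show that $H$ is an ${\mathbb Q}_{\vv a}$-submodule of $\mathbb R$, i.e.\ closed under $+$ and under scalar multiplication by elements of ${\mathbb Q}_{\vv a}$. In both cases the underlying principle is the same: $H$ is determined by the membership $j(r)\in I$ of the ``jump set'' of $r$, and Lemma~\ref{L:jalg} controls how $j$ behaves under $+$ and $\ominus$, while Lemma~\ref{L:phs}(ii),(iii) and the hypothesis that $\vv a$ is adapted to $I$ (so $\{n\mid a_n\neq a_{n+1}\}\in I$) and that $I$ is translation invariant guarantee that the finitely many ``error sets'' appearing in Lemma~\ref{L:jalg} all lie in $I$. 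I should also record the trivial facts $0\in H$ and $H=-H$, which are immediate from the definition \eqref{E:hh}.

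\textbf{Step 1: closure under addition.} Let $h,h'\in H$. Writing each of $h,h'$ as $\pm r$ with $r\in{\mathbb R}_{\geq 0}$ and $j(r)\in I$, the sum $h+h'$ is, up to an overall sign, of the form $r+s$ or $r\ominus s$ with $r,s\in{\mathbb R}_{\geq 0}$ and $j(r),j(s)\in I$; here I use $r\ominus s=|r-s|$ from \eqref{E:abm} and the fact that $j(-t)$ should be read as $j(t)$. So it suffices to show $j(r+s)\in I$ and $j(r\ominus s)\in I$. By Lemma~\ref{L:jalg}, both are contained in the union of the three sets in \eqref{E:thrs}. The first set $j(r)\cup j(s)\in I$ since $I$ is an ideal. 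The second set $\{n\mid n+1\in j(r)\cup j(s)\}\in I$ by translation invariance of $I$ applied to $j(r)\cup j(s)\in I$. For the third set, $\{n\mid a_n\neq a_{n+1}\}\in I$ because $\vv a$ is adapted to $I$, and $\{n\mid a_{n+1}\neq a_{n+2}\}$ is the backward translate of $\{m\mid a_m\neq a_{m+1}\}$, hence in $I$ by translation invariance again. A finite union of members of $I$ is in $I$, so $j(r+s),j(r\ominus s)\in I$, whence $h+h'\in H$. (That $H$ is a subgroup then follows since $0\in H$ and $H=-H$.)

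\textbf{Step 2: closure under scalar multiplication.} Let $q\in{\mathbb Q}_{\vv a}$ and $h\in H$; I want $qh\in H$. Since $H=-H$ and $H$ is closed under $+$ (Step 1), it is closed under multiplication by every integer, so it suffices to handle $q=1/p$ for a prime $p\in{\rm pr}(\vv a)$, i.e.\ a prime dividing $a_n$ for all but finitely many $n$; a general $q\in{\mathbb Q}_{\vv a}$ is an integer combination of such reciprocals composed with integer multiplication. Fix $r\in{\mathbb R}_{\geq 0}$ with $j(r)\in I$ and consider $r/p$. The plan is to compare the digit sequence of $r/p$ with that of $r$: since $p\mid a_n$ for $n\geq N_0$ say, there is a clean relationship between the tail digits of $r/p$ and those of $r$ — morally ``dividing by $p$ shifts/rescales the expansion'' — and the set of $n$ where the digit of $r/p$ changes differs from a set built out of $j(r)$, $\{n\mid a_n\neq a_{n+1}\}$ and their translates only by a finite set, all of which are in $I$ (using again that $\vv a$ is adapted to $I$, that $I$ is translation invariant, that $I$ is an ideal, and that $I$ contains all singletons). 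Hence $j(r/p)\in I$ and $r/p\in H$.

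\textbf{Main obstacle.} Step 1 is essentially a bookkeeping exercise once Lemma~\ref{L:jalg} is in hand. The real work is in Step 2, specifically pinning down the digit expansion of $r/p$ in the base-$\vv a$ system in terms of that of $r$ when $p$ eventually divides every $a_n$, and then extracting from that the containment $j(r/p)\subseteq^*(\text{something in }I)$. One delicacy is the ``eventually'' — the first $N_0-1$ coordinates behave irregularly, but this only contributes a finite set, harmless since singletons (hence finite sets) are in $I$. A second delicacy is handling the integer part and the carry behavior at the transition from the irregular head to the regular tail; again these perturb $j(r/p)$ by only finitely much. I would carry out Step 2 by first reducing (via Step 1 and $H=-H$) to $q=1/p$, then doing the digit computation for the tail, and finally absorbing the head into a finite error set.
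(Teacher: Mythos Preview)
Your Step~1 is correct and essentially identical to the paper's argument (the paper phrases it as closure of $H_{\geq 0}=\{r\geq 0\mid j(r)\in I\}$ under $+$ and $\ominus$, which amounts to the same thing). One small remark: the reference to Lemma~\ref{L:phs} in your Plan is misplaced---that lemma concerns lsc submeasures, which are not yet assumed in this section; you only need the ideal and translation-invariance axioms, as you correctly use in Step~1 itself.

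Your Step~2 has the right outline and is exactly the route the paper takes, but as you acknowledge, the digit computation is the crux and you have not carried it out. The intuition ``dividing by $p$ shifts/rescales the expansion'' is too loose: the $n$-th digit of $r/p$ depends on \emph{two} consecutive digits of $r$, via a carry mechanism. Concretely, the paper shows that for $n$ beyond the finite set where $p\nmid a_n$, the $n$-th digit of $r/p$ equals
\[
\Big\lfloor \frac{r_n}{p}\Big\rfloor + \frac{a_n}{p}\, r'_{n-1},
\]
where $r'_{n-1}$ is the residue of $r_{n-1}$ modulo $p$ in $[0,p)$. From this formula one reads off directly that if $r_{n-1}=r_n=r_{n+1}$ and $a_n=a_{n+1}$ then the $n$-th and $(n+1)$-st digits of $r/p$ coincide, giving
\[
j(r/p)\subseteq j(r)\cup\{n+1\mid n\in j(r)\}\cup\{n\mid a_n\neq a_{n+1}\}\cup\{1,\dots,N\},
\]
whose right side lies in $I$ for the reasons you list. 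The paper establishes the digit formula by an induction on the truncations $[r]_n$ and then passes to the limit via Lemma~\ref{L:conv digi}(i); that passage requires assuming $r/p$ has infinitely many non-zero digits, the finite-digit case being disposed of separately since then $j(r/p)$ is finite. None of this is deep, but it is not a one-liner either, and your heuristic does not by itself predict the two-digit dependence---which is precisely what forces the forward translate $\{n+1\mid n\in j(r)\}$ to appear on the right and makes translation invariance of $I$ genuinely necessary here, not just in Step~1.
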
 

\begin{proof} Let 
    \[
    H_{\geq 0}= \{ r\in {\mathbb R}\mid r\geq0\hbox{ and } j(r)\in I\}. 
    \]

    (i) Recall the operation $\ominus$ from \eqref{E:abm}. It suffices to show that $H_{\geq 0}$  is closed under $+$ and $\ominus$, which follows from Lemma~\ref{L:jalg} 
    since $\vv a$ is adapted to $I$ and the ideal $I$ is translation invariant.
    
    (ii) It is sufficient to show that for each $p \in {\rm pr}({\vv{a}})$ and  $r\in H_{\geq 0}$
\begin{equation}\label{E:hpo}    
\frac{1}{p}r \in H_{\geq 0}.
\end{equation} 
If $\frac{1}{p}r$ has finitely many non-zero digits, then $j(\frac{1}{p}r)$ is finite, so $\frac{1}{p}r \in H_{\geq 0}$. Thus, in the remainder of this proof, we assume that 
$\frac{1}{p}r$ has infinitely many non-zero digits. 

Let $N \in \mathbb N$ such that $p \mid a_n$ for all $n \geq N$. The existence of such $N$ follows from $p$ belonging to ${\rm pr}({\vv{a}})$.    
Now, \eqref{E:hpo} will be a consequence of the inclusion 
\begin{equation}\label{E:hpof}
j\big(\frac{1}{p}r\big) \subseteq 
j(r) \cup \{n+1 \mid n \in j(r)\} \cup \{n \mid a_n \neq a_{n+1}\} \cup \{1,\dots,N\},
\end{equation} 
since ${\vv{a}}$ is adapted to $I$, $j(r)\in I$, and $I$ is translation invariant.

The proof of \eqref{E:hpof} is done by computing the digits of $\frac{1}{p}r$ from the digits of $r$.
We need two pieces of notation. Let    
    \begin{equation}\label{E:trucr}
     [r]_n = [r]+ \sum_{k=1}^n \frac{r_k}{a_1\cdots a_k}, 
\end{equation}
and let ${r'_n}$ be the integer such that
    \[
    0 \leq {r'_n} < p\; \hbox{ and }\; {r'_n} \equiv r_n \mod p.
    \]
Also, the following identity, that is easy to see, will be useful 
\begin{equation}\label{E:idr} 
\frac{r_n}{a_1a_2\cdots a_np} = \frac{\big[\frac{r_n}{p}\big]}{a_1a_2 \cdots a_n}+ \frac{\frac{a_{n+1}}{p} r'_n}{a_1a_2\cdots a_{n+1}}.
\end{equation}

To get the relevant digits of $\frac{1}{p}r$, we first find certain digits of $\frac{1}{p}[r]_n$, for $n\geq N+1$.
Observation \eqref{E:aaa} is verified by induction below: 
\begin{equation}\label{E:aaa} 
\begin{split}
&\hbox{for $n \geq N+1$, the $n$-th and $n+1$-st digits of $\frac{1}{p}[r]_{n}$ are}\\
&\big[\frac{r_n}{p}\big]+\frac{a_n}{p}{r'_{n-1}} \hbox{ and }  \frac{a_{n+1}}{p} r'_n, \hbox{ respectively,}\\ 
&\hbox{and all the digits following them are $0$.}
\end{split} 
\end{equation}
Before we start proving \eqref{E:aaa}, observe that a quick calculation shows 
\begin{equation}\label{E:dig} 
0\leq \big[\frac{r_{n}}{p}\big]+\frac{a_n}{p}{r'_{n-1}} < a_n. 
\end{equation} 
To obtain the base case $n=N+1$ of \eqref{E:aaa}, 
note that $a_1a_2\cdots a_{N}\frac{1}{p}[r]_{N-1}$ is an integer, which implies that for all $n \geq N+1$, the $n$-th digit of $\frac{1}{p}[r]_{N-1}$ is $0$. 
By adding $\frac{r_{N}}{a_1a_2\cdots a_{N} p}$ to $\frac{1}{p}[r]_{N-1}$ and using \eqref{E:idr} with $n=N$, 
we see that $\frac{1}{p}[r]_{N}$ has the $N+1$-st digit $\frac{a_{N+1}}{p}r'_N$ and all the digits after that are $0$. Using \eqref{E:idr} with $n = N+1$ to add 
$\frac{r_{N+1}}{a_1a_2\cdots a_{N+1} p}$ to $\frac{1}{p}[r]_{N}$ and remembering \eqref{E:dig}, 
we obtain the base case. The inductive step is straightforward from the inductive hypothesis and \eqref{E:idr}. 

From \eqref{E:aaa} and \eqref{E:idr} we see that, for all $n \geq N+1$, adding $\frac{r_{n+1}}{a_1a_2\cdots a_{n+1} p}$ to $\frac{1}{p}[r]_{n}$ 
does not change the $i$-th digit for $i \leq n$. Putting together this observation and \eqref{E:aaa}, we see that, for $n \geq N+1$, the $n$-th digit of $\frac{1}{p}[r]_{k}$ for $k\geq n$ is 
\begin{equation}\label{E:digit}
\big[\frac{r_n}{p}\big]+\frac{a_n}{p}{r'_{n-1}}.
\end{equation}   
Since $\frac{1}{p}r$ is assumed to have infinitely many non-zero digits and 
$\frac{1}{p}[r]_{k} \to \frac{1}{p}r$ as $k \to \infty$, by Lemma~\ref{L:conv digi}(i), for each $n$, the $n$-th digit of $\frac{1}{p}r$ is equal to that of 
$\frac{1}{p}[r]_{k}$ for large $k$. In particular, for $n \geq N+1$, the $n$-th digit of $\frac{1}{p}r$ is given by \eqref{E:digit}. 
It follows that for $n \geq N+1$ if $r_{n-1} = r_{n} = r_{n+1}$ and $a_n = a_{n+1}$, then the $n$-th digit of $\frac{1}{p}r$ is the same as its $n+1$-st digit; that is, 
if $n-1 \notin j(r)$ and $n \notin j(r)$ and $a_{n} = a_{n+1}$, then $n \notin j\big(\frac{1}{p}r\big)$, and \eqref{E:hpof} follows. 
\end{proof}

\subsubsection{Topology on $H$}

In this section, we assume that the ideal $I$ is translation invariant and an analytic P-ideal.

Our goal is to describe a topology on $H$. We will need the following general lemma that is essentially due to Chittenden \cite{Ch}. 
We derive it here from the basic theory of uniformities, for which we refer the reader to \cite[Section~8.1]{En}.

\begin{lemma}\label{L:metr} 
Let $X$ be a set and $\rho\colon X\times X\to {\mathbb R}$ a function such that 
\begin{enumerate}
\item[(i)] for all $x,y\in X$, $\rho(x,y)\geq 0$, and $\rho(x,y)=0$ if and only if $x=y$;

\item[(ii)] for all $x,y\in X$, $\rho(x,y)=\rho(y,x)$; 

\item[(iii)] for each $\epsilon>0$, there exists $\delta>0$ such that, for all $x,y, z\in X$, if $\rho(x,y)<\delta$ and $\rho(y,z)<\delta$, then $\rho(x,z)<\epsilon$. 
\end{enumerate} 
Then there exists a metric $d$ on $X$ such that, for each $\epsilon >0$, there exists $\delta>0$ with
\[
\big(\rho(x,y)<\delta \Rightarrow d(x,y)<\epsilon\big) \hbox{ and } \big(d(x,y)<\delta \Rightarrow \rho(x,y)<\epsilon\big), \hbox{  for all }x,y\in X.
\]
\end{lemma}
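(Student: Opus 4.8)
The statement is a classical metrization result for ``approximate metrics'' (premetrics satisfying a weak, quantitative triangle inequality), so the plan is to run the standard uniformity machinery. First I would use condition (iii) to build a countable descending sequence of entourages. Concretely, set $U_\epsilon = \{(x,y)\in X\times X : \rho(x,y)<\epsilon\}$ and, starting from $\epsilon_0 = 1$, recursively choose $\epsilon_{n+1}\le \epsilon_n/2$ small enough that, by (iii), $U_{\epsilon_{n+1}}\circ U_{\epsilon_{n+1}}\subseteq U_{\epsilon_n}$; we may also shrink so that $U_{\epsilon_{n+1}}\circ U_{\epsilon_{n+1}}\circ U_{\epsilon_{n+1}}\subseteq U_{\epsilon_n}$ if the cleaner form of the metrization lemma is wanted. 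By (ii) each $U_\epsilon$ is symmetric, and by (i) each contains the diagonal and $\bigcap_n U_{\epsilon_n} = \Delta_X$. Hence $\{U_{\epsilon_n}\}_n$ is a base for a (separated) uniformity $\mathcal U$ on $X$.

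Next I would invoke the metrization theorem for uniformities with a countable base (see \cite[Section~8.1]{En}): such a uniformity is induced by a pseudometric, and separatedness upgrades it to a metric $d$. The usual construction gives $d$ with the two-sided comparison $\frac12\, U_{\epsilon_{n+1}} \subseteq \{d<2^{-n}\} \subseteq U_{\epsilon_n}$ up to the constants produced by the chaining lemma; the precise inequalities are not important. What matters is that $d$ and the premetric $\rho$ induce the same uniformity, i.e.\ the families $\{U_{\epsilon}\}$ and $\{\{d<\epsilon\}\}$ are mutually cofinal. Unwinding ``mutually cofinal'' is exactly the pair of implications in the conclusion: given $\epsilon>0$, pick $n$ with $2^{-n}<\epsilon$ and then $\delta$ with $U_\delta\subseteq \tfrac12 U_{\epsilon_{n+1}}$, so $\rho(x,y)<\delta \Rightarrow d(x,y)<\epsilon$; and symmetrically, since some $U_{\epsilon_m}\subseteq U_\epsilon$, choosing $\delta = 2^{-m}$ gives $d(x,y)<\delta \Rightarrow \rho(x,y)<\epsilon$.

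I do not expect a serious obstacle here, since everything is bookkeeping on top of the cited theory of uniformities; the one point requiring a little care is the passage from condition (iii) (a single $\delta$ for a single $\epsilon$) to a genuine \emph{countable} chain of entourages with the composition property $U_{n+1}\circ U_{n+1}\subseteq U_n$, which is where the recursive halving of $\epsilon_n$ is used and where one must check that the $\epsilon_n$ can be taken to decrease to $0$ (immediate, since we impose $\epsilon_{n+1}\le\epsilon_n/2$). A secondary point is verifying separatedness, $\bigcap_n U_{\epsilon_n}=\Delta_X$, which is where the ``$\rho(x,y)=0$ iff $x=y$'' half of (i) enters. Alternatively, if one prefers to avoid quoting the uniform metrization theorem as a black box, the same chain $\{U_{\epsilon_n}\}$ feeds directly into the classical Frink/Chittenden chaining argument: define $d(x,y)=\inf\sum_i \lambda(x_i,x_{i+1})$ over finite chains, where $\lambda(x,y)=2^{-n}$ for the largest $n$ with $(x,y)\in U_{\epsilon_n}$ (and $\lambda = $ constant otherwise), and prove $\tfrac12\lambda \le d\le\lambda$ by induction on chain length; this is the only genuinely computational step, and it is routine. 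Either route yields the asserted $d$.
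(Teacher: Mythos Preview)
Your proposal is correct and follows essentially the same route as the paper: define the entourages $U_\epsilon=\{(x,y):\rho(x,y)<\epsilon\}$, verify from (i)--(iii) that they generate a separated uniformity with a countable base, and invoke the metrization theorem for such uniformities from \cite[Section~8.1]{En}. The paper's argument is slightly terser (it checks the uniformity axioms directly for the full family $\{V_\epsilon\}_{\epsilon>0}$ and cites \cite[Theorem~8.1.21]{En}), while you spell out the recursive construction of a countable chain $U_{\epsilon_n}$ and mention the Frink/Chittenden chaining alternative, but these are presentational differences only.
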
 

\begin{proof} 
For $A\subseteq X\times X$, let 
\[
A^{-1} = \{ (y,x)\mid (x,y)\in A\}. 
\]

Define 
\begin{equation}\label{E:unde} 
V_\epsilon = \{ (x,y)\in X\times X\mid  \rho(x,y) <\epsilon \},\; \hbox{ for }\epsilon >0.
\end{equation} 
The family $\mathcal U$ consisting of all subsets $A$ of $X\times X$ such that $A=A^{-1}$ and 
$A\supseteq V_\epsilon$ for some $\epsilon>0$ is a uniformity on $X$. 
To show this, we need to see the following three properties: 
\begin{enumerate}
\item[(i)] $\bigcap_{\epsilon>0} V_\epsilon = \{ (x,y)\in X\times X\mid x=y\}$; 

\item[(ii)] $V_\epsilon^{-1}  = V_{\epsilon}$;

\item[(iii)] for each $\epsilon>0$, there exists $\delta>0$ such that 
\[
\{ (x,z) \mid  (x,y), (y,z)\in V_\delta, \hbox{ for some } y\in X\} \subseteq V_\epsilon.
\]
\end{enumerate}
These properties are immediate corollaries of the corresponding properties of $\rho$ from the assumptions of the lemma. By \cite[Theorem~8.1.21]{En}, 
the uniformity $\mathcal U$ is given by a metric $d$ on $X$. The conclusion of the lemma follows from the definition of the sets $V_\epsilon$. 
\end{proof}

Since $I$ is an analytic P-ideal, we have $I = {\rm Exh}(\phi)$, for some lsc submeasure $\phi$.  We fix such $\phi$. Recall the operation $\ominus$ from \eqref{E:abm}, and 
define 
\[
\rho(r,s) = \phi\big( j(r\ominus s)\big) + (r\ominus s), \; \hbox{ for } r,s \in \mathbb R.
\]
The next result gives a weak form of the triangle inequality for $\rho$ as in Lemma~\ref{L:metr}(iii). 

\begin{lemma}\label{L:rhotri}
For any $\epsilon > 0$ there exists $\delta > 0$ such that for any $r,s,t$ in $H$,
\[
\rho(r,s), \; \rho(s,t) < \delta \Rightarrow \rho(r,t) < \epsilon.
\]
\end{lemma}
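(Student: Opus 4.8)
The plan is to decompose $\rho$ into its two summands, $r \mapsto \phi(j(r\ominus s))$ and $r\mapsto r\ominus s$, and control each separately, then recombine. The second summand is just the usual metric $\ominus$ on $\mathbb R$, so it satisfies the genuine triangle inequality and presents no difficulty. The real work is with the first summand, and here the key tool is Lemma~\ref{L:jalg}, which tells us that $j(r\ominus t) = j((r\ominus s)\ominus(s\ominus t))$ — provided we write $r\ominus t$ in terms of $r\ominus s$ and $s\ominus t$ — is contained in the union of $j(r\ominus s)$, $j(s\ominus t)$, their backward shifts $\{n \mid n+1 \in j(r\ominus s)\cup j(s\ominus t)\}$, and the fixed set $A := \{n \mid a_n\neq a_{n+1}\}\cup\{n\mid a_{n+1}\neq a_{n+2}\}$. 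The subtlety is that $\ominus$ is not additive on $\mathbb R$; but for $r,s,t$ we always have either $r\ominus t \le (r\ominus s)+(s\ominus t)$ with the three signs arranged so that $r\ominus t$ is a genuine sum or difference of $r\ominus s$ and $s\ominus t$ up to sign, and in all cases $r\ominus t \in \{(r\ominus s)+(s\ominus t),\ (r\ominus s)\ominus(s\ominus t)\}$ or we may instead bound $j(r\ominus t)$ directly by applying Lemma~\ref{L:jalg} to suitable non-negative reals. I would phrase this as: $j(r\ominus t)$ is contained in the union of $j(u)\cup j(v)$, the shift $\{n\mid n+1\in j(u)\cup j(v)\}$, and $A$, where $u = r\ominus s$ and $v = s\ominus t$.

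Now I would run the $\epsilon$–$\delta$ argument. Note first that $A$ is a \emph{fixed} set, and since $\vv a$ is adapted to $I$ we have $\{n\mid a_n\neq a_{n+1}\}\in I$; by translation invariance $\{n\mid a_{n+1}\neq a_{n+2}\}\in I$ as well, so $A\in I = {\rm Exh}(\phi)$. Crucially, though, I cannot simply add $\phi(A)$ to the estimate, because $\phi(A)$ may be large. Instead I use that $A\in {\rm Exh}(\phi)$ means $\phi(A\setminus\{1,\dots,m\})\to 0$; so, given $\epsilon>0$, first fix $m$ with $\phi(A\setminus\{1,\dots,m\}) < \epsilon/4$. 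Then choose $\delta$ small enough that (a) by Lemma~\ref{L:phs}(iii) (translation invariance), $\phi(j(u)),\phi(j(v)) < \delta'$ implies the backward shifts have $\phi$-submeasure $<\epsilon/8$ each; (b) by Lemma~\ref{L:phs}(ii), finitely many sets of small $\phi$-submeasure have union of $\phi$-submeasure $<\epsilon/2$; and (c) $\delta < \epsilon/2$ so the metric parts $u,v$ add up to less than $\epsilon$, and moreover — this is the one extra point — $\delta$ is small enough that if $\rho(r,s)<\delta$ then in particular $u = r\ominus s < 1/(a_1\cdots a_m)$, so by Lemma~\ref{L:rul}(i) the set $j(u)$ and the relevant digit data are concentrated past coordinate $m$; similarly for $v$. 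This lets me intersect everything with $\{m+1, m+2, \dots\}$, so that the contribution of $A$ that actually appears is $\phi(A\setminus\{1,\dots,m\}) < \epsilon/4$ rather than $\phi(A)$.

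Assembling: with $\delta$ chosen as above and $\rho(r,s),\rho(s,t)<\delta$, subadditivity and monotonicity of $\phi$ give
\[
\phi\big(j(r\ominus t)\big) \le \phi\big(j(u)\big) + \phi\big(j(v)\big) + \phi\big(\{n\mid n+1\in j(u)\}\big) + \phi\big(\{n\mid n+1\in j(v)\}\big) + \phi\big(A\cap\{m+1,m+2,\dots\}\big),
\]
and each of the first four terms is $<\epsilon/8$ (from $\phi(j(u)),\phi(j(v))<\delta$ small and Lemma~\ref{L:phs}(ii),(iii)) while the last is $<\epsilon/4$, so $\phi(j(r\ominus t)) < 3\epsilon/4$; adding the metric part $r\ominus t \le u+v < 2\delta < \epsilon/4$ gives $\rho(r,t)<\epsilon$, after rescaling the $\epsilon$'s appropriately at the start.

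I expect the main obstacle to be the bookkeeping around the \emph{fixed} set $A$: one must not be tempted to write $\phi(A)$, and making the "concentration past coordinate $m$" step rigorous requires carefully matching the threshold $1/(a_1\cdots a_m)$ in Lemma~\ref{L:rul}(i) against the definition of $\rho$ and checking that the backward-shift set $\{n\mid n+1\in j(u)\}$, together with the $A$-terms coming from Lemma~\ref{L:jalg}, all genuinely live above $m$ (or above $m-1$, which is harmless after enlarging $m$ by one). Everything else — the decomposition via Lemma~\ref{L:jalg}, the sign analysis of $\ominus$, and the repeated use of Lemma~\ref{L:phs} — is routine.
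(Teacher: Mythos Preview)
Your proposal is correct and follows essentially the same route as the paper's proof: both arguments use the identity $r\ominus t \in \{(r\ominus s)+(s\ominus t),\ (r\ominus s)\ominus(s\ominus t)\}$, apply Lemma~\ref{L:jalg} to bound $j(r\ominus t)$, invoke Lemma~\ref{L:phs}(ii),(iii) to control unions and shifts, choose a cutoff $m$ (the paper's $N$) so that the tail of the fixed set $A\in I$ has small $\phi$-value, and then use Lemma~\ref{L:rul}(i) to force $j(r\ominus t)$ above that cutoff. The bookkeeping differences (your $\epsilon/4,\epsilon/8$ versus the paper's $6\epsilon$ at the end; your observation about possibly losing one coordinate versus the paper's choice of $N+2$ digits) are cosmetic.
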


\begin{proof}
%The proof is a modification of the proof of Claim 3 in \cite[Theorem 5]{So2} with the base sequence taken into account. 
Fix $\epsilon>0$. Note that $r \ominus t$ is equal to one of the following
\[
(r \ominus s) + (s \ominus t), \;  (r \ominus s) \ominus (s \ominus t).
\]
By Lemma~\ref{L:jalg}, $j(r \ominus t)$ is included in the union of the following three sets
\begin{equation}\label{E:thrs2}
    \begin{split} 
    &j(r\ominus s) \cup j(s \ominus t),\\  
    &\{n \mid n+1 \in j(r\ominus s) \cup j(s \ominus t )\},\\
    &\{n \mid a_n \neq a_{n+1}\} \cup \{n \mid a_{n+1} \neq a_{n+2}\}.
    \end{split}
\end{equation} 
By Lemma~\ref{L:phs}(ii) and (iii), there is $\delta>0$ such that 
\begin{equation}\label{E:minus1}
\phi\big(j(r\ominus s)\big), \; \phi\big(j(s\ominus t)\big) < \delta
\Rightarrow 
\phi\big(\{n \mid n+1 \in j(r\ominus s)\cup j(s \ominus t )\}\big) < \epsilon.
\end{equation}
Moreover, since $\{n \mid a_n \neq a_{n+1}\} \cup \{n \mid a_{n+1} \neq a_{n+2}\}$ is in $I$, there is $N$ large enough such that 
\begin{equation}\label{E:tail}
\phi\big(\{n \mid a_n \neq a_{n+1}\} \cup \{n \mid a_{n+1} \neq a_{n+2}\} \setminus \{1,\dots, N\}\big) < \epsilon.
\end{equation}
For this $N$, by Lemma~\ref{L:rul}(i), 
there exits $\gamma > 0$ such that if $0\leq r < \gamma$, then the first $N+2$ digits of $r$ are all zeros. 
It follows that if $r \ominus s, \; s \ominus t < \gamma$, then the first $N+2$ digits of $r \ominus s$ and $s \ominus t$ are all zeros, therefore, $r \ominus t$ has the first $N+1$ digits equal to zero. 
This implies that $j(r \ominus t)$ contains no elements from $\{1,\dots,N\}$.
Furthermore, from the definition of $\ominus$, we have 
\begin{equation}\label{E:trio}
r \ominus s < \epsilon, \; s\ominus t < \epsilon \Rightarrow r \ominus t < 2\epsilon. 
\end{equation}
Finally, let 
\[
\delta' = \min\{\gamma, \delta, \epsilon\}.
\]
Hence, by our choice of $\delta'$, \eqref{E:minus1}, \eqref{E:tail}, and \eqref{E:trio}, we get 
\[
\rho(r,s), \; \rho(s,t) < \delta' \Rightarrow \rho(r,t) < 6\epsilon,
\]
and the lemma follows. 
\end{proof}

\begin{lemma}\label{L:tar} 
There exists a metric $d$ on $H$ such that 
 for each $\epsilon >0$, there exists $\delta>0$ with
\[
\big(\rho(r,s)<\delta \Rightarrow d(r,s)<\epsilon\big) \hbox{ and } \big(d(r,s)<\delta \Rightarrow \rho(r,s)<\epsilon\big), \hbox{  for all }r,s\in H.
\]
\end{lemma}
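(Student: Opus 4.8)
The plan is to obtain $d$ by applying Lemma~\ref{L:metr} with $X=H$ and with $\rho$ restricted to $H\times H$. The conclusion of Lemma~\ref{L:metr} is then word for word the statement we want, so the task reduces to verifying that $\rho$, restricted to $H\times H$, satisfies hypotheses (i)--(iii) of that lemma.

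First I would check that $\rho$ is genuinely $\mathbb R$-valued (and not merely $[0,\infty]$-valued) on $H\times H$, since Lemma~\ref{L:metr} requires a function into $\mathbb R$. For $r,s\in H$, Lemma~\ref{L:base1}(i) gives $r-s\in H$, hence $r\ominus s=|r-s|\in H$ with $r\ominus s\geq 0$; by the definition \eqref{E:hh} of $H$ this yields $j(r\ominus s)\in I={\rm Exh}(\phi)$. Any set $x\in{\rm Exh}(\phi)$ has $\phi(x)<\infty$: once $n$ is large enough that $\phi(x\setminus\{1,\dots,n\})<1$, monotonicity and subadditivity of $\phi$ bound $\phi(x)$ by $\sum_{k\leq n}\phi(\{k\})+1<\infty$. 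Thus $\rho(r,s)=\phi\big(j(r\ominus s)\big)+(r\ominus s)\in{\mathbb R}_{\geq 0}$ for $r,s\in H$.

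Next, hypotheses (i) and (ii) of Lemma~\ref{L:metr} are immediate. Nonnegativity of $\rho$ is clear from its definition. If $r=s$, then $r\ominus s=0$, and since $0$ has all digits equal to $0$ we get $j(0)=\emptyset$, so $\rho(r,s)=\phi(\emptyset)+0=0$; conversely $\rho(r,s)=0$ forces the summand $r\ominus s$ to vanish, i.e.\ $r=s$. Symmetry holds because $r\ominus s=s\ominus r$, whence $j(r\ominus s)=j(s\ominus r)$ and therefore $\rho(r,s)=\rho(s,r)$.

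Finally, hypothesis (iii) of Lemma~\ref{L:metr}---for every $\epsilon>0$ there is $\delta>0$ such that $\rho(r,s),\rho(s,t)<\delta$ implies $\rho(r,t)<\epsilon$ for all $r,s,t\in H$---is precisely Lemma~\ref{L:rhotri}. Hence Lemma~\ref{L:metr} applies and produces the desired metric $d$ on $H$. There is essentially no obstacle remaining at this point: the substantive content, the quasi-triangle inequality for $\rho$ on $H$, was already established in Lemma~\ref{L:rhotri} (using translation invariance of $I$ and Lemma~\ref{L:phs}), and the passage from a near-metric to an honest metric is handled abstractly by Lemma~\ref{L:metr}, which is Chittenden's metrization lemma. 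The only mild care needed is the finiteness observation in the second paragraph, ensuring that $\rho$ really is a function into $\mathbb R$ on $H\times H$ so that Lemma~\ref{L:metr} is applicable.
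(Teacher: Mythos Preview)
Your proposal is correct and follows exactly the paper's approach: verify that $\rho$ satisfies hypotheses (i)--(iii) of Lemma~\ref{L:metr}, with (iii) supplied by Lemma~\ref{L:rhotri}, and then invoke Lemma~\ref{L:metr}. The paper's proof is terser (it simply asserts (i) and (ii) are clear), whereas you add the finiteness check for $\rho$ on $H\times H$ and spell out the details of (i) and (ii); this extra care is harmless and arguably an improvement.
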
 

\begin{proof} It suffices to check that $\rho$ fulfills properties (i)--(iii) from the assumption of Lemma~\ref{L:metr}. 
It is clear that $\rho$ has (i) and (ii); $\rho$ satisfies (iii) by Lemma~\ref{L:rhotri}.
\end{proof} 

Consider the topology induced on $H$ by the metric $d$ from Lemma~\ref{L:tar}. We call this topology the {\bf submeasure topology}. 
The definitions of $\rho$, $d$, and, therefore, also the submeasure topology depend on $\phi$ with 
$I= {\rm Exh}(\phi)$. Lemma~\ref{L:phs}(i) implies that these objects depend only on $I$ and not on the choice of $\phi$.

\subsubsection{The Polish module}

Recall that we have a base sequence $\vv a$ and an ideal $I$ such that $\vv a$ is adapted to $I$.  
We assume now that $I$ is a translation invariant analytic P-ideal $I$. 
We are ready to define the ${\mathbb Q}_{\vv a}$-module 
\[
I[{\vv a}].
\]
The underlying set of $I[{\vv a}]$ is $H$ from \eqref{E:hh}.
By Lemma~\ref{L:base1}, $H$ with the operation of $+$ and multiplication by elements of ${\mathbb Q}_{\vv a}$ inherited from $\mathbb R$ is a 
${\mathbb Q}_{\vv a}$-module. We topologize $H$ with the submeasure topology.

\begin{theorem}\label{T:C} 
Let $I$ be a translation invariant analytic P-ideal of subsets of $\mathbb N$, and let a base sequence $\vv a$ be adapted to $I$. Then $I[{\vv a}]$ is a Polish 
${\mathbb Q}_{\vv a}$-module and the identity map 
$I[{\vv a}]\to {\mathbb R}$ is a continuous ${\mathbb Q}_{\vv a}$-embedding. 
\end{theorem}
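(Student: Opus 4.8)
\textbf{Proof plan for Theorem~\ref{T:C}.}

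The plan is to verify four things in turn: that the submeasure topology makes $H$ a topological group under $+$, that scalar multiplication by each fixed $r\in{\mathbb Q}_{\vv a}$ is continuous (and that the action ${\mathbb Q}_{\vv a}\times H\to H$ is continuous with ${\mathbb Q}_{\vv a}$ discrete, which reduces to this), that $(H,d)$ is separable and completely metrizable, and finally that the identity map $I[{\vv a}]\to{\mathbb R}$ is a continuous injection. The algebraic structure is already in hand by Lemma~\ref{L:base1}, so the content is entirely topological.

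For the topological group structure, the key observation is that, by Lemma~\ref{L:tar}, it suffices to work with the weak ``pseudo-metric'' $\rho$ rather than with $d$ itself: a sequence $r(k)\to r$ in the submeasure topology iff $\rho(r(k),r)\to 0$, and likewise joint continuity can be checked through $\rho$. Continuity of subtraction then follows from Lemma~\ref{L:rhotri} together with the estimate, via Lemma~\ref{L:jalg}, Lemma~\ref{L:phs}(ii),(iii), and Lemma~\ref{L:rul}(i), that $\rho\big((r\ominus s),(r'\ominus s')\big)$ is small when $\rho(r,r')$ and $\rho(s,s')$ are small — this is essentially the same computation as in the proof of Lemma~\ref{L:rhotri}, applied to the four terms entering $j$ of a combination of differences. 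For scalar multiplication, continuity of $r\mapsto nr$ for $n\in{\mathbb N}$ follows from $n$-fold use of continuity of $+$; continuity of $r\mapsto \frac1p r$ for $p\in{\rm pr}(\vv a)$ is the delicate point, and here I would reuse the digit computation from the proof of Lemma~\ref{L:base1}, specifically the inclusion \eqref{E:hpof} and formula \eqref{E:digit}: it shows that the digits of $\frac1p r$ past index $N$ depend in a fixed, ``local'' way on the digits of $r$, so that if $r\ominus s$ is small and $j(r\ominus s)$ is $\phi$-small then $\frac1p r\ominus\frac1p s$ is small and $j(\frac1p r\ominus\frac1p s)$ is $\phi$-small, using again Lemma~\ref{L:phs}(ii),(iii) and the fact that $\vv a$ is adapted to $I$ with $I$ translation invariant.

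For Polishness, separability is straightforward: the set of $r\in H$ with finitely many nonzero digits is countable (these have $j(r)$ finite, hence in $I$), and it is $\rho$-dense in $H$ by Lemma~\ref{L:rul}(i) — truncating $r$ at a high digit makes both $r\ominus r'$ and $\phi(j(r\ominus r'))$ small, the latter because the symmetric difference is a tail set and tails are $\phi$-small for $j(r)\in{\rm Exh}(\phi)={\rm Exh}(\phi)$; one also adjusts the truncation so that $j$ of the difference sits inside a tail of $j(r)\cup\{n\mid a_n\neq a_{n+1}\}$. Completeness is the main obstacle and the heart of the argument: given a $d$-Cauchy sequence $(r(k))$ in $H$, it is $\rho$-Cauchy, so it is Cauchy in ${\mathbb R}$ (since $r\ominus s\le\rho(r,s)$) and converges to some $r\in{\mathbb R}$; passing to a subsequence we may assume $\sum_k (r(k)\ominus r(k+1))<\infty$ and $\sum_k \phi(j(r(k)\ominus r(k+1)))<\infty$. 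One must then show $r\in H$, i.e.\ $j(r)\in I={\rm Exh}(\phi)$, and that $\rho(r(k),r)\to 0$. For this I would use Lemma~\ref{L:jalg} to bound $j(r(k)\ominus r)=j\big(\sum_{l\ge k}\pm(r(l)\ominus r(l+1))\big)$ — more carefully, $r\ominus r(k)=\lim_m (r(m)\ominus r(k))$ and $r(m)\ominus r(k)$ is built from the differences $r(l)\ominus r(l+1)$, $k\le l<m$, so iterating Lemma~\ref{L:jalg} together with the convergence of $\sum\phi(j(\cdot))$ and the second bullet in the proof of Lemma~\ref{L:phs} (summable $\phi$-masses give a set in $I$), plus translation invariance to absorb the shift terms $\{n\mid n+1\in\cdot\}$ and the fixed set $\{n\mid a_n\neq a_{n+1}\}$, yields $j(r(k)\ominus r)\in I$ with $\phi$-mass tending to $0$ as $k\to\infty$; applying this with $k$ fixed and $r(0)=0\in H$ also gives $j(r)\in I$, so $r\in H$, and the $\phi$-mass decay is exactly $\rho(r(k),r)\to 0$. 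Here one has to be slightly careful that the qualitative digit-continuity of Lemma~\ref{L:conv digi}/Lemma~\ref{L:contj} is available — condition (ii) of those lemmas (monotone approximation) or condition (i) (infinitely many nonzero digits) — handling separately the trivial case where $r$ has finitely many nonzero digits. Finally, the identity map $I[{\vv a}]\to{\mathbb R}$ is continuous because $r(k)\ominus r\le\rho(r(k),r)$, and it is injective because it is literally the inclusion $H\hookrightarrow{\mathbb R}$; it is a ${\mathbb Q}_{\vv a}$-module homomorphism since the operations on $H$ are the restrictions of those on ${\mathbb R}$. This completes the proof.
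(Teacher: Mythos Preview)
Your completeness argument has a real gap. You want to bound $j(r\ominus r(k))$ by iterating Lemma~\ref{L:jalg} along the telescoping increments $r(l)\ominus r(l+1)$, using summability of $\sum_l\phi\big(j(r(l)\ominus r(l+1))\big)$ and translation invariance to ``absorb the shift terms''. But each application of Lemma~\ref{L:jalg} introduces a shift $\{n:n+1\in\cdot\}$ together with the fixed set $\{n:a_n\neq a_{n+1}\}\cup\{n:a_{n+1}\neq a_{n+2}\}$; after $m-k$ iterations these shifts compose, so both the increment sets and the adaptation set end up shifted by as much as $m-k$, and you need $m\to\infty$. Lemma~\ref{L:phs}(iii) gives only qualitative $\epsilon$--$\delta$ control of a \emph{single} shift, with no uniform constant, so it does not survive unbounded iteration; and for a general adapted $\vv a$ the union over $j$ of the $j$-fold left-shifts of $\{n:a_n\neq a_{n+1}\}$ can be all of $\mathbb N$. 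The summability hypothesis says nothing about shifted copies, so this route does not close as stated.

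The paper avoids iteration entirely. It first proves a uniform tail estimate on the sets $j(r(n))$ themselves --- for each $\epsilon$ and each large enough $M$, one has $\phi\big(j(r(n))\setminus\{1,\dots,M\}\big)<\epsilon$ for all large $n$ --- by using the Cauchy condition to compare every $r(n)$ to a \emph{single} finite-digit approximant $s$. With this in hand, $r\in H$ follows by contradiction via Lemma~\ref{L:contj}(i), and then \emph{one} application of Lemma~\ref{L:jalg} to the pair $(r(n),r)$ bounds $j(r(n)\ominus r)$ in terms of $j(r(n))$, $j(r)$, their one-step shifts, and the adaptation set, all of which are now controlled. Two smaller differences worth noting: the paper handles continuity of $+$ and of scalar multiplication \emph{after} establishing Polishness, by soft arguments --- translation-invariance of $\rho$ together with \cite[Corollary~9.15]{Ke} for the group law, and Pettis' theorem for each map $r\mapsto qr$ (a Borel group homomorphism between Polish groups) --- thereby sidestepping the direct digit analysis for $r\mapsto\frac1p r$ that you sketch; and the paper treats separately the degenerate case in which $I$ is the ideal of finite sets (the submeasure topology is then discrete on a countable set), which your separability-via-truncation argument does not cover.
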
 

We isolate the following result as a lemma as we will use it twice. 

\begin{lemma}\label{L:separ} 
Let $F$ be the collection of elements of ${\mathbb R}_{\geq 0}$ that have finitely many non-zero digits. If $I$ has an element that is an infinite subset of $\mathbb N$, 
then $F \cup -F$ is dense in $I[\vv a]$ with the submeasure topology. 
\end{lemma}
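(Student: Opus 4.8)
The plan is to show that any element $h$ of $I[\vv a]$ can be approximated in the submeasure topology by elements of $F\cup -F$. Since the submeasure topology is induced by the metric $d$ from Lemma~\ref{L:tar}, and $d$ is equivalent (in the sense of that lemma) to $\rho$, it suffices to make $\rho(h,t)$ small for suitable $t\in F\cup -F$. By the definition of $H$ and the symmetry $r\mapsto -r$ (note $j(-r)$ is interpreted via $|{-r}|=r$, so $j$ behaves the same on $\pm r$, and $r\ominus s$ only sees absolute values appropriately — more precisely I will first reduce to the case $h=r\in H_{\geq 0}$, approximating by elements of $F$, and then the $-F$ case follows by applying $r\mapsto -r$), I may assume $h = r \ge 0$ with $j(r)\in I$.

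First I would fix $\epsilon>0$ and an infinite set $x\in I$. For $n\in\mathbb N$ let $t_n = [r]_n$ be the truncation from \eqref{E:trucr}, which lies in $F$ (it has only finitely many non-zero digits). Then $r\ominus t_n = r - t_n = \sum_{k>n} \frac{r_k}{a_1\cdots a_k} < \frac{1}{a_1\cdots a_n}\sum_{k>n}\frac{a_k-1}{a_{n+1}\cdots a_k} \le \frac{1}{a_1\cdots a_n} \to 0$ as $n\to\infty$, so the real-part contribution $(r\ominus t_n)$ to $\rho(r,t_n)$ goes to zero. The issue is the submeasure part $\phi(j(r\ominus t_n))$. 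Here the key observation is that for $i\le n$ the $i$-th digit of $t_n$ equals $r_i$, so the digits of $r - t_n$ where they could possibly differ from $r$'s digits are confined to indices near $n$ and beyond; combined with Lemma~\ref{L:jalg} applied to the pair $(r, -t_n)$ — more carefully, $r\ominus t_n = r - t_n$, and one computes directly that $j(r - t_n) \subseteq \{n, n+1, n+2, \dots\} \cup (\text{small correction from } \{m : a_m\ne a_{m+1}\})$ — one sees $j(r\ominus t_n) \subseteq (j(r)\cup \{m: a_m\ne a_{m+1}\}\cup\{m:a_{m+1}\ne a_{m+2}\})\setminus\{1,\dots,n-2\}$ together with possibly $\{n-1,n\}$ or so.

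Since $j(r)\in I$, $\{m:a_m\ne a_{m+1}\}\in I$ (because $\vv a$ is adapted to $I$), and $I$ is translation invariant, the union of these sets is a single element $y\in I = {\rm Exh}(\phi)$. By definition of ${\rm Exh}(\phi)$, $\phi(y\setminus\{1,\dots,n\})\to 0$ as $n\to\infty$. Hence $\phi\big(j(r\ominus t_n)\big) \le \phi\big(y\setminus\{1,\dots,n-2\}\big) + \phi(\{n-1,n\}) \to 0$ — wait, $\phi(\{n-1,n\})$ need not go to zero in general; but the singletons $n-1,n$ arise only from the ``correction'' terms, and a closer digit computation confines the genuinely new elements of $j(r\ominus t_n)$ (those not already forced to lie in the tail of $y$) to indices $\ge n-1$ that also lie in $y$, so they are absorbed into $\phi(y\setminus\{1,\dots,n-2\})$. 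Thus $\rho(r,t_n)\to 0$, giving $t_n\to r$ in the submeasure topology.

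\textbf{Main obstacle.} The delicate point is the precise bookkeeping in the digit computation showing $j(r\ominus t_n)$ is eventually contained in the tail of a fixed set $y\in I$ — in particular verifying that no ``new'' index outside $y$ (and outside $\{1,\dots,n\}$ bounded region) enters $j(r - t_n)$, so that lower semicontinuity of $\phi$ on ${\rm Exh}(\phi)$ can be invoked. This is exactly the kind of computation already performed in the proof of Lemma~\ref{L:jalg} and Lemma~\ref{L:base1}, so I expect it to be routine but requires care; everything else (the real-part estimate, passing from $\rho$ to $d$, the reduction to $H_{\geq 0}$ via $r\mapsto -r$, and quoting ${\rm Exh}(\phi)$) is immediate from the lemmas already established.
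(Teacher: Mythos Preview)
Your approach has a genuine gap at exactly the point you flag as the ``main obstacle,'' and your proposed resolution is incorrect. A direct digit computation (simpler than invoking Lemma~\ref{L:jalg}) shows that $r-[r]_n$ has $i$-th digit $0$ for $i\le n$ and $r_i$ for $i>n$, so
\[
j(r-[r]_n)=\big(j(r)\setminus\{1,\dots,n\}\big)\cup\big(\{n\}\text{ if }r_{n+1}\ne 0\big).
\]
Thus $\phi\big(j(r-[r]_n)\big)\le \phi\big(j(r)\setminus\{1,\dots,n\}\big)+\phi(\{n\})$. The first term tends to $0$, but the singleton term $\phi(\{n\})$ need \emph{not}: the hypothesis is only that $I$ contains one infinite set, not that $I$ is tall. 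Your claim that the extra index $n$ ``also lies in $y$'' is false---whether $n$ enters $j(r-[r]_n)$ depends on $r_{n+1}\ne 0$, which says nothing about $n\in j(r)$ or $n\in\{m:a_m\ne a_{m+1}\}$. (Concretely: take $\phi$ with $\phi(\{n\})\ge 1$ for all odd $n$ and an $r$ whose digits are eventually a nonzero constant; then $j(r-[r]_n)=\{n\}$ for large $n$ and $\phi(\{n\})\not\to 0$.)

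The fix, which is what the paper does, is to choose $n$ along a carefully selected subsequence rather than over all of $\mathbb N$. If $j(r)$ is infinite, take $n\in j(r)$: then the possible extra $\{n\}$ is already in $j(r)$, so $j(r-[r]_n)$ equals either $j(r)\setminus\{1,\dots,n-1\}$ or $j(r)\setminus\{1,\dots,n\}$, and $\phi$ of this tends to $0$. If $j(r)$ is finite (so the digits of $r$ are eventually a nonzero constant, since $r\notin F$), use the hypothesis: an infinite $x\in I={\rm Exh}(\phi)$ gives a sequence $n_i\to\infty$ with $\phi(\{n_i\})\to 0$, and for large $i$ one has $j(r-[r]_{n_i})=\{n_i\}$. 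You introduced the infinite set $x\in I$ at the outset but never used it; this is precisely where it enters.
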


\begin{proof} 
It suffices to show that $F$ is dense in $H_{\geq 0}$.
For $r \in H_{\geq 0}$, recall $[r]_n$ from \eqref{E:trucr}. Clearly $[r]_n$ is in $F$ for every $n$. We verify that there is a subsequence of 
$([r]_n)_{n \in \mathbb N}$ that converges to $r$ in the submeasure topology.
If $r$ is in $F$, we have $[r]_n = r$ for all but finitely many $n$. If $r$ is not in $F$, there are two cases. 

When $j(r)$ is finite, fix a sequence $(n_i)$ of natural numbers such that 
$\phi(\{n_i\}) \to 0$. The existence of such a sequence follows from the assumption that an infinite subset of $\mathbb N$ is in $I$. Since $j(r-[r]_{n_i})= \{n_i\}$ 
for all but finitely many $i$, 
\begin{equation}\label{E:ni} 
\phi\big(j(r-[r]_{n_i})\big) \to 0\;\hbox{ as }i \to \infty. 
\end{equation}

Now assume $j(r)$ is infinite. Note that, for $n \in j(r)$, 
$j(r-[r]_n)$ is equal to one of the following two sets 
\[
j(r)\setminus \{1,\dots,n-1\} \text{, } j(r)\setminus \{1,\dots,n\}
\]
Thus, since $j(r)\in {\rm Exh}(\phi)$, we get 
\begin{equation}\label{E:ni2}
\phi\big(j(r-[r]_n)\big) \to 0\;\hbox{ as }j(r)\ni n\to\infty.
\end{equation} 

Furthermore, $(r-[r]_n) \to 0$ in $\mathbb R$ as $n\to \infty$, so, 
in both cases, by \eqref{E:ni} and \eqref{E:ni2}, we get a subsequence of $([r]_n)$ that converges to $r$ in terms of $\rho$, and this implies convergence in the submeasure topology 
by Lemma~\ref{L:tar}.
\end{proof}

\begin{proof}[Proof of Theorem~\ref{T:C}]
We need to see that the submeasure topology on $I[{\vv a}]$ is Polish. 
If $I$ is the ideal of all finite subsets of $\mathbb N$, then $I[\vv a]$ is countable and the submeasure topology is discrete, so $I[\vv a]$ is Polish. The first observation is immediate. 
To see the second one, fix a lsc submeasure $\phi$ such that $I = {\rm Exh}(\phi)$ and $\phi(\{n\}) > 1$ for all $n$. As pointed out, the submeasure topology does not depend on the 
choice of $\phi$. Let $\epsilon = \frac{1}{a_1}$. Then, for $r,s \in H$ with $\rho(r,s) < \epsilon$, we can conclude from the definition of $\rho$ and Lemma~\ref{L:rul}(i) that 
$[r \ominus s] = 0$, the first digit of $r\ominus s$ is $0$, and $j(r \ominus s) = \emptyset$.
These properties together imply that $r \ominus s = 0$. By Lemma~\ref{L:tar}, this implies the topology induced by $d$ is discrete. 

Now we assume $I$ is not the ideal of finite sets, or equivalently, $I$ has an infinite subset of $\mathbb N$ as element. Separability follows from 
Lemma~\ref{L:separ}. 
It remains to see that the metric $d$ is complete, that is, we need to see that each $d$-Cauchy sequence in $I[{\vv a}]$ 
converges with respect to $d$ to an element of $I[{\vv a}]$. It will suffice to do it for sequences contained in $H_{\geq 0}$ and, by Lemma~\ref{L:tar}, with Cauchy-ness and convergence 
understood in terms of $\rho$. So let $(r(n))$ be a sequence in $H_{\geq 0}$ such that, for each $\epsilon>0$, $\rho(r(m), r(n))< \epsilon$ for large $m,n$. 
By the definition of $\rho$, $(r(n))$ is Cauchy in $\mathbb R$, so it converges to some $r \in \mathbb R_{\geq 0}$. 
We need to show that $r \in H_{\geq 0}$ and $\rho(r(n), r)\to 0$. To do that, we first prove the following claim.
\begin{claim*}\label{Cl:cplt} 
Given $\epsilon>0$, for each large enough $M\in {\mathbb N}$, we have 
\[
\phi\big(j(r(n))\setminus \{1,\dots,M\}\big) < \epsilon\;\hbox{ for large }n.
\]
\end{claim*}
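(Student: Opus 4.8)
The plan is to exploit the $\rho$-Cauchy condition of the sequence $(r(n))$ together with the defining property of the exhaustive ideal $I={\rm Exh}(\phi)$. Fix $\epsilon>0$. The first step is to choose, using Lemma~\ref{L:rhotri} (or rather the uniform control it provides), a threshold $\delta>0$ so small that $\rho(r(m),r(n))<\delta$ forces the ``transferred'' submeasure estimates appearing in the proof of Lemma~\ref{L:rhotri} to stay below $\epsilon$; in particular I want $\phi(j(r(m)\ominus r(n)))<\delta$ to control $\phi$ of the shifted set $\{n\mid n+1\in j(r(m)\ominus r(n))\}$ via Lemma~\ref{L:phs}(iii), and to absorb the tail of $\{n\mid a_n\neq a_{n+1}\}\cup\{n\mid a_{n+1}\neq a_{n+2}\}$ beyond some $N$ via Lemma~\ref{L:phs}(ii) and the fact that this set lies in $I$. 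By Cauchyness fix $n_0$ with $\rho(r(m),r(n))<\delta$ for all $m,n\ge n_0$.

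The second step is the key algebraic identity: for any $m,n$, since $r(m)=(r(m)\ominus r(n))+r(n)$ up to the $\ominus$ ambiguity (exactly as in the proof of Lemma~\ref{L:rhotri}), Lemma~\ref{L:jalg} gives
\[
j(r(m))\subseteq j(r(m)\ominus r(n))\cup j(r(n))\cup\{k\mid k+1\in j(r(m)\ominus r(n))\cup j(r(n))\}\cup\{k\mid a_k\neq a_{k+1}\}\cup\{k\mid a_{k+1}\neq a_{k+2}\}.
\]
Now fix $m\ge n_0$ with the additional property (available since $r(m)\in H_{\ge0}$, so $j(r(m))\in{\rm Exh}(\phi)$) that $\phi(j(r(m))\setminus\{1,\dots,M_0\})<\epsilon$ for all large $M_0$. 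Intersecting the displayed inclusion with the complement of $\{1,\dots,M\}$ for $M$ large, and using subadditivity of $\phi$, the term $j(r(m)\ominus r(n))$ contributes less than $\epsilon$ (it has small $\phi$-value entirely, by choice of $\delta$), the shifted copy of $j(r(m)\ominus r(n))$ contributes less than $\epsilon$ by Lemma~\ref{L:phs}(iii), the $a$-difference sets contribute less than $\epsilon$ once $M\ge N$, the shifted $a$-difference sets likewise, the term $j(r(m))\setminus\{1,\dots,M\}$ contributes less than $\epsilon$ for $M$ large, and the shifted copy of $j(r(m))\setminus\{1,\dots,M\}$ again by Lemma~\ref{L:phs}(iii) and monotonicity; so for every $n\ge n_0$ and every sufficiently large $M$ (a bound depending only on $m$, hence fixed), $\phi(j(r(n))\setminus\{1,\dots,M\})$ is bounded by a fixed multiple of $\epsilon$. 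Rescaling $\epsilon$ at the outset gives the claim.

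The main obstacle is bookkeeping: one must make sure the large $M$ produced works simultaneously for all $n\ge n_0$, which is exactly what the argument above achieves because after fixing the single index $m$, every set on the right-hand side of the inclusion except $j(r(n))$ itself is either independent of $n$ (the $a$-difference sets, and $j(r(m))$ and its shift) or has globally small $\phi$-value (the sets $j(r(m)\ominus r(n))$ and their shifts, uniformly in $n$ by Cauchyness). The finitely many indices $n<n_0$ are handled separately and trivially, since each individual $j(r(n))$ lies in ${\rm Exh}(\phi)$, so for each of them $\phi(j(r(n))\setminus\{1,\dots,M\})<\epsilon$ for $M$ large; taking the maximum of the finitely many thresholds finishes the proof of the claim for \emph{all} $n$, large or not. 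I would present the argument for $n\ge n_0$ in detail and dispatch the finite exceptional set in one sentence.
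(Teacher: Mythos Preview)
Your argument is essentially correct, but there is a notational slip in the displayed inclusion: as written it bounds $j(r(m))$ in terms of $j(r(n))$, whereas the rest of your analysis (and the conclusion you draw) requires the roles reversed. Since you state the inclusion ``for any $m,n$'' this is harmless---just swap $m$ and $n$ to get
\[
j(r(n))\subseteq j(r(m)\ominus r(n))\cup j(r(m))\cup\{k\mid k+1\in j(r(m)\ominus r(n))\cup j(r(m))\}\cup\{k\mid a_k\neq a_{k+1}\}\cup\{k\mid a_{k+1}\neq a_{k+2}\},
\]
which is what you actually use. Also, the separate treatment of $n<n_0$ is unnecessary: the Claim only asks for the estimate to hold for large $n$.

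Your route differs from the paper's. The paper does not fix a term $r(m)$ of the sequence; instead it invokes Lemma~\ref{L:separ} to approximate $r(k)$ by an element $s$ with finitely many non-zero digits, obtains $\phi(j(r(n)\ominus s))<\epsilon/2$ for $n\ge k$, and then argues directly (by a short case analysis on the sign of $r(n)-s$) that $j(r(n))\setminus\{1,\dots,M\}$ sits inside $j(r(n)\ominus s)$, possibly together with $\{k\mid a_k\neq a_{k+1}\}$, once $M$ exceeds a bound depending only on $s$. Your approach bypasses Lemma~\ref{L:separ} and the digit-level case analysis altogether, trading them for a single appeal to Lemma~\ref{L:jalg} and a heavier use of Lemma~\ref{L:phs}(ii),(iii); the price is a somewhat larger constant ($6\epsilon$ rather than $\epsilon$), which is of course irrelevant. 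Both arguments rely on the same two facts---Cauchyness gives uniformly small $\phi(j(r(n)\ominus\,\cdot\,))$, and a single fixed anchor (your $r(m)$, their $s$) has $j$-set in $I$---and either is a perfectly clean way to establish the Claim.
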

\noindent{\em Proof of Claim.}  
Using Lemma~\ref{L:rhotri}, for the given $\epsilon$, we fix $\delta > 0$ such that for all $s,t,u \in H_{\geq 0} $
\begin{equation} \label{E:semitri}
\rho(s,t), \; \rho(t,u) < \delta \Rightarrow \rho(s,u) < \epsilon/2.
\end{equation}
By Cauchy-ness of $(r(n))$, there exists $k$ such that  
\[
\rho(r(n),r(k)) < \delta \hbox{ for all  } n \geq k. 
\]
Since $r(k)$ is in $H_{\geq 0}$ and, from Lemma~\ref{L:separ}, $F$ is dense in $H_{\geq 0}$, there is a number $s$ with finitely many non-zero digits such that 
\[
\rho(r(k), s))  < \delta.
\] 
By \eqref{E:semitri} and the two inequalities above, $\rho(r(n), s)< \epsilon/2$ for all $n \geq k$. In particular, 
\begin{equation}\label{E:rngeqk}
\phi\big(j(r(n) \ominus s)\big) < \frac{\epsilon}{2}, \hbox{ for all }n \geq k.
\end{equation}

Notice that, for each $n$, either $r(n) \ominus s = r(n) - s$ or $r(n) \ominus s = s - r(n)$. 
In the first case, it is easy to see that subtracting $s$ only affects finitely many digits of $r(n)$, so, for large enough $M$, which depends only on $s$,
\begin{equation}\label{E:rns1}
j\big(r(n)\big)\setminus \{1,\dots,M\} \subseteq j\big(r(n) - s\big).
\end{equation}  
In the second case, it is not hard to see that for large enough $M$, which also only depends on $s$,
\begin{equation}\label{E:rns2}
j\big(r(n)\big)\setminus \{1,\dots,M\} \subseteq j\big(s-r(n)\big) \cup \{n \mid a_n \neq a_{n+1}\}.
\end{equation} 
Further, since $\{n \mid a_n \neq a_{n+1}\} \in I$, there is $N \in \mathbb N$ such that
\[
\phi\big(\{n \mid a_n \neq a_{n+1}\} \setminus \{1,\dots,N\}\big) < \epsilon/2. 
\]
Hence, putting together both cases, we see that for $M$ such that \eqref{E:rns1} and \eqref{E:rns2} hold and, in the second case, $M\geq N$, we have 
\[
\phi\big(j(r(n))\setminus \{1,\dots,M\}\big)  < \phi\big(j(r(n)\ominus s)\big)+\epsilon/2.
\]
This inequality implies the conclusion of the claim by \eqref{E:rngeqk} .

\smallskip
For contradiction, suppose that $r \notin H_{\geq 0}$, that is, $j(r) \notin I$ since $r$ is non-negative. Then 
there exists $\epsilon > 0$ such that, for each $m$, there is a finite set $P_m \subseteq j(r)$ with $\min(P_m) > m$ and $\phi(P_m) \geq \epsilon$. 
By Lemma~\ref{L:contj}(i) and the fact that $r$ has infinitely many non-zero digits (as otherwise $r\in H_{\geq 0}$), 
each $P_m$ is included in $j(r(n))$ for large enough $n$, which contradicts Claim when $m = M$. Hence $r$ is in $H_{\geq 0}$.

We now show that $\rho(r(n), r)\to 0$. Since $r(n)\ominus r\to 0$ in $\mathbb R$ as $n\to \infty$, it is enough to see that 
\begin{equation}\label{E:cauc}
\phi\big( j(r(n)\ominus r)\big) \to 0 \hbox{ as }n\to\infty. 
\end{equation} 
Assume towards a contradiction that this is not the case, so we can fix $\epsilon>0$ with 
\begin{equation}\label{E:bel}
\phi\big( j(r(n)\ominus r)\big) >\epsilon\hbox{ for infinitely many }n.
\end{equation}
By Lemma~\ref{L:rul}(i), we have that for each $i$, for large $n$, $(r(n)\ominus r)_i = 0$.
Thus, we get that, for each $M$, 
\begin{equation}\label{E:bel2}
\{ 1, \dots, M\} \cap  j\big(r(n)\ominus r\big) = \emptyset\;\hbox{ for large }n. 
\end{equation}
By Lemma~\ref{L:jalg}, for each $n$, $j(r(n)\ominus r)$ is included in the union of the following sets 
\[
\begin{split} 
&j(r(n)),\; j(r),\;  \{k \mid a_k \neq a_{k+1}\} ,\\  
&\{k \mid k+1 \in j\big(r(n)\big)\},\; \{ k\mid k+1\in j(r)\},\; \{k \mid a_{k+1} \neq a_{k+2}\}.
\end{split}
 \]
Thus, by \eqref{E:bel}, \eqref{E:bel2}, and Lemma~\ref{L:phs}(iii), there exists $\delta>0$ such that, for each $M$, one of the following statements is true 
\begin{equation}\label{E:three}
\begin{split} 
\phi\big( j(r(n)\big)\setminus \{ 1, \dots, M\} \big) &>\delta\;\hbox{ for infinitely many }n,\\
\phi\big( j(r)\setminus \{ 1, \dots, M\} \big) &>\delta,\\
\phi\big( \{ k\mid a_k\neq a_{k+1}\}\setminus \{ 1, \dots, M\}  \big) &>\delta.
\end{split} 
\end{equation} 
Since $j(r), \{ k\mid a_k\neq a_{k+1}\}\in I$, for large $M$, the last two inequalities of \eqref{E:three} fail. Hence, the first inequality holds for large $M$ 
contradicting Claim. So \eqref{E:cauc} is proved.

Note that, by Lemma~\ref{L:tar} and the definition of $\rho$, the submeasure topology on $I[{\vv a}]$ contains the topology it inherits from $\mathbb R$. 
Thus, the inclusion map $I[{\vv a}]\to {\mathbb R}$ is continuous. Since the algebraic operations on $H$ are inherited from $\mathbb R$, the inclusion map 
is a  ${\mathbb Q}_{\vv a}$-embedding.

We prove that $I[{\vv a}]$ with $+$ is a topological group with the submeasure topology. Since the submeasure topology is Polish, 
by \cite[Corollary~9.15]{Ke}, it suffices to show that the functions 
\[
I[{\vv a}]\ni s\to r+s\in I[{\vv a}], \hbox{ for each }r\in I[{\vv a}]
\]
and 
\[
I[{\vv a}]\ni s\to -s\in I[{\vv a}]
\]
are continuous. Observe that $\rho$ has the following invariance properties 
\[
\rho(s_1, s_2) = \rho(r+ s_1, r+s_2)\;\hbox{ and }\; \rho(s_1, s_2) = \rho(-s_1, -s_2)
\]
for all $r, s_1, s_2\in I[{\vv a}]$. The continuity of the functions above is now an immediate consequence of the invariance properties of $\rho$ 
and Lemma~\ref{L:tar}. 

Finally, we show that multiplication by elements of ${\mathbb Q}_{\vv a}$ is continuous on $I[{\vv a}]$ taken with the submeasure topology. 
Observe first that since the submeasure topology is Polish and the identity map 
$I[{\vv a}]\to {\mathbb R}$ is continuous, by \cite[Corollary 15.2]{Ke}, the submeasure topology has the same Borel sets as the topology $I[{\vv a}]$ inherited from $\mathbb R$.
Now, multiplication by an element of ${\mathbb Q}_{\vv a}$, being a Borel map on $I[{\vv a}]$ 
taken with the topology inherited from $\mathbb R$, is also a Borel map on $I[{\vv a}]$ taken with the submeasure topology. 
It is clearly also a group homomorphism. 
Thus, since the submeasure topology is Polish, by Pettis Theorem \cite[Theorem 9.10]{Ke}, this map is continuous. 
\end{proof}

\section{Inclusions and homomorphisms among modules}\label{S:3}

We will need two more properties of ideals and base sequences.  
An ideal of subsets of $\mathbb N$ is {\bf tall} if each infinite subset of $\mathbb N$ contains an infinite subset in $I$. 
If $I$ is of the form $I= {\rm Exh}(\phi)$ for a lsc submeasure $\phi$, then it is easy to check that $I$ is tall precisely when 
\begin{equation}\label{E:tal} 
\phi\big( \{ n\}\big)\to 0\;\hbox{ as }n\to \infty. 
\end{equation}
We call a base sequence $\vv{a}$ {\bf uniform} provided that, for each prime $p$, if $p\mid a_n$ for some $n$, then $p\mid a_n$ for all but finitely many $n$; in other words, 
\[
{\rm pr}({\vv{a}}) = \{ p \mid \; p \hbox{ a prime and } p\mid a_n \hbox{ for some }n\}.
\]

\subsection{Inclusions and non-inclusions}

In this section, we view modules of the form $I[{\vv a}]$ simply as subsets of $\mathbb R$. We are interested in the inclusion relation 
among such sets as the ideal $I$ varies.

First, we have an observation that inclusion between ideals implies inclusion between the corresponding modules. 

\begin{proposition}\label{P:incl} 
Let $I,J$ be translation invariant analytic P-ideals, and let $\vv a$ be a base sequence adapted to both $I$ and $J$. 
If $I \subseteq J$, then $I[{\vv a}]\subseteq J[{\vv a}]$. 
\end{proposition}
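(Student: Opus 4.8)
The plan is to unwind the definitions: by \eqref{E:hh}, the underlying set of $I[{\vv a}]$ is
\[
H_I = \{ r\mid r\in {\mathbb R}_{\geq 0},\ j(r)\in I\}\cup\{-r\mid r\in {\mathbb R}_{\geq 0},\ j(r)\in I\},
\]
and similarly for $J[{\vv a}]$ with $H_J$ defined using $J$ in place of $I$. Since the statement asserts only an inclusion of subsets of $\mathbb R$ (the modules are ``viewed simply as subsets of $\mathbb R$''), it suffices to prove $H_I\subseteq H_J$.

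First I would reduce to the non-negative part: $H_I$ is symmetric under $r\mapsto -r$, so it is enough to show that $r\in{\mathbb R}_{\geq 0}$ with $j(r)\in I$ implies $j(r)\in J$. This is now immediate from the hypothesis $I\subseteq J$: if $j(r)\in I$ then $j(r)\in J$, hence $r$ lies in the non-negative part of $H_J$. Combining this with the symmetry gives $H_I\subseteq H_J$, which is exactly $I[{\vv a}]\subseteq J[{\vv a}]$ in the sense intended.

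There is essentially no obstacle here; the only thing to be careful about is that the function $j=j_{\vv a}$ used to define both modules is the \emph{same} function (it depends only on $\vv a$, which is adapted to both $I$ and $J$ by hypothesis), so the sets $H_I$ and $H_J$ are genuinely nested rather than merely being built by analogous recipes. One might also remark that this inclusion is purely set-theoretic and says nothing about the submeasure topologies, which is consistent with the phrasing of the section; the topological compatibility of the inclusion map $I[{\vv a}]\hookrightarrow J[{\vv a}]$, if needed, would follow from the fact that both carry topologies refining the one inherited from $\mathbb R$ together with an automatic-continuity argument as in the proof of Theorem~\ref{T:C}, but that is not part of the present statement.
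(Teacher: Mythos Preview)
Your proof is correct and takes essentially the same approach as the paper, which simply states that the inclusion follows directly from the assumption $I\subseteq J$. Your version just makes explicit the unwinding of the definition of the underlying set $H$; the added remarks about $j_{\vv a}$ depending only on $\vv a$ and about the topologies are accurate but not needed for the statement.
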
 

\begin{proof} The inclusion $I[{\vv a}]\subseteq J[{\vv a}]$ follows directly from the assumption $I\subseteq J$. 
\end{proof}

Now, we prove a theorem asserting that a slightly stronger notion of non-inclusion between ideals implies non-inclusion between the corresponding modules, even if we allow the modules 
to be multiplied by non-zero real numbers. We need the following two definitions.
Let $I$ and $J$ be analytic P-ideals. We say that $I$ {\bf is not included in $J$ on intervals} if there are lsc submeasures $\phi$ and $\psi$ with $I= {\rm Exh}(\phi)$ and $J = {\rm Exh}(\psi)$, for which 
there exists $d>0$ such that 
\[
\inf\{ \phi(P)\mid P \hbox{ an interval in }{\mathbb N} \hbox{ with }\psi(P)\geq d\} =0.
\]
Notice that this property implies $I \nsubseteq J$.

For a subset $X$ of $\mathbb R$ and a real number $c$, we write 
\[
c\, X = \{ cx\mid x\in X\}. 
\]

\setcounter{claim}{0}

\begin{theorem} \label{T:D}
Let $I,J$ be translation invariant, analytic P-ideals with $I$ being tall. 
Let $\vv a$ be a base sequence adapted to both $I$ and $J$. 
If $I$ is not included in $J$ on intervals, 
then $c \, I[{\vv a}]\not\subseteq J[{\vv a}]$, for each non-zero real number $c$. 
\end{theorem}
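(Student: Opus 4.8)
The plan is to argue by contradiction. Suppose there were a non-zero real $c$ with $c\,I[\vv a]\subseteq J[\vv a]$. Replacing $c$ by $-c$ if necessary (both $I[\vv a]$ and $J[\vv a]$ are symmetric under $r\mapsto -r$), we may assume $c>0$. Fix lsc submeasures $\phi,\psi$ witnessing that $I$ is not included in $J$ on intervals: there is $d>0$ so that there are intervals $P$ in $\mathbb N$ with $\psi(P)\ge d$ but $\phi(P)$ arbitrarily small. The strategy is to build, for each such interval $P$, a real number $r=r_P\in I[\vv a]$ whose ``digit-jump set'' $j(r)$ is controlled by $P$ in the sense that $\phi(j(r))$ is small, while the digit-jump set $j(cr)$ of the scaled number contains (up to a small, $I$-controlled error) a translate/chunk of $P$, so that $\psi(j(cr))\ge d'$ for some fixed $d'>0$ independent of $P$. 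Letting $\phi(j(r))\to 0$ along a sequence of such intervals then produces, via Lemma~\ref{L:tar}, a sequence converging to $0$ in $I[\vv a]$ whose images under multiplication by $c$ do \emph{not} converge to $0$ in $J[\vv a]$ — but multiplication by $c$ need not be a $\mathbb Q_{\vv a}$-module map, so instead I will directly contradict membership $cr\in J[\vv a]$, i.e. contradict $j(cr)\in J={\rm Exh}(\psi)$, by producing a single $r\in I[\vv a]$ (a suitable infinite sum of the building blocks $r_P$ over a sparse sequence of intervals $P_k$) with $\phi(j(r))$ still finite/controlled — using tallness of $I$ and the analytic P-ideal property to sum the small errors — but $\psi\big(j(cr)\setminus\{1,\dots,M\}\big)\ge d'$ for every $M$, so $j(cr)\notin {\rm Exh}(\psi)=J$, contradicting $cr\in J[\vv a]$.

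The concrete building block: given an interval $P=\{m+1,\dots,m+\ell\}$, take $r_P$ to be the real whose digits (in base $\vv a$) are $0$ outside $P$ and constant equal to some fixed value $v$ (with $1\le v\le a_n-1$, exploiting $a_n\ge 2$; if the $a_n$ vary on $P$ we can still arrange a nonzero constant or near-constant block using that the relevant part of $\{n\mid a_n\ne a_{n+1}\}$ lies in $I$) throughout $P$. Then $j(r_P)\subseteq\{m,m+\ell\}\cup(\text{part of }\{n\mid a_n\ne a_{n+1}\})$, so $\phi(j(r_P))$ is small once $\min P$ is large (using tallness of $I$: $\phi(\{n\})\to 0$) plus the tail of $\{n\mid a_n\ne a_{n+1}\}$. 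Multiplying by $c$: write $c = c_0 + \sum c_n/(a_1\cdots a_n)$; multiplication by $c$ on a digit block that is $0$ except on a long interval $P$ produces, on a sub-interval $P'\subseteq P$ of length $\ell - O(1)$ (losing only boundary terms from carries and from the integer/leading part of $c$), a pattern of digits that is, digit by digit, a fixed non-constant periodic-ish pattern determined by $c$ — crucially one that is \emph{not} eventually constant on $P'$, because $c\ne 0$ forces the ``multiply-and-carry'' map to be non-degenerate on long constant blocks. Hence $j(cr_P)\supseteq$ a large sub-interval-worth of jumps, i.e. $j(cr_P)\cap P'$ is cofinite in $P'$ or at least contains an interval $Q$ with $|Q|\ge |P|-O(1)$; combined with $\psi(P)\ge d$ and lsc-submeasure subadditivity/monotonicity together with Lemma~\ref{L:phs}(iii) to absorb the $O(1)$ boundary loss, we get $\psi(j(cr_P))\ge d'$ for a fixed $d'>0$.

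The assembly step: choose intervals $P_1,P_2,\dots$ with $\min P_{k}\to\infty$ fast, $\sum_k\phi(j(r_{P_k}))<\infty$, and the blocks $P_k$ far apart so that $r:=\sum_k r_{P_k}$ is a well-defined element of $\mathbb R_{\ge0}$ with $j(r)\subseteq\bigcup_k j(r_{P_k})\cup(\text{gaps})$; here ``gaps'' contributes at most one jump per consecutive pair of blocks, a set that is in $I$ and has summable $\phi$-mass, so by the second bullet in the proof of Lemma~\ref{L:phs} (summable $\phi$-masses give an element of $I$) we get $j(r)\in I$, i.e. $r\in I[\vv a]$. On the other hand $j(cr)\supseteq j(cr_{P_k})\cap P'_k$ modulo the same bounded boundary/carry interactions between far-apart blocks, so $\psi\big(j(cr)\setminus\{1,\dots,M\}\big)\ge d'$ for all $M$ (pick $k$ with $\min P_k>M$), whence $j(cr)\notin J$ and $cr\notin J[\vv a]$ — contradiction.

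**Main obstacle.** The crux, and the step I expect to be genuinely delicate, is the digit-level analysis of multiplication by the fixed real $c$: showing that on a long constant (or slowly varying) digit-block the product $cr_P$ has digit-jump set containing an interval of length $|P|-O(1)$, with the $O(1)$ loss uniform in $P$, and that carries from one far-apart block do not propagate into another. This requires a careful base-$\vv a$ version of the schoolbook multiplication-with-carry estimate (analogous in spirit to the computations in the proof of Lemma~\ref{L:base1} for division by $p$, and to Lemma~\ref{L:jalg}), and in particular a quantitative statement that $c\ne 0$ \emph{forces} non-eventual-constancy of the product pattern. Everything else — tallness to make $\phi(\{n\})\to 0$, Lemma~\ref{L:phs} to absorb shifts and to sum errors, Lemma~\ref{L:tar}/Lemma~\ref{L:separ} for the topology, and the final ${\rm Exh}(\psi)$ contradiction — is routine once that digit estimate is in hand.
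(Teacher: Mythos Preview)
Your central claim---that multiplying a long constant digit-block by a nonzero real $c$ produces a block with jumps at almost every position---is false, and the gap is fatal. Take $c=1$: then $cr_P = r_P$ has $j(cr_P) = j(r_P) \subseteq \{m, m+\ell\}$, so $\psi(j(cr_P))$ is certainly not bounded below by any fixed $d'>0$. The same happens for many other $c$: with $a_n \equiv 10$ and block-digit $v=3$, multiplication by $c=2$ sends the constant block of $3$'s to a constant block of $6$'s. There is simply no sense in which ``$c\ne 0$ forces the multiply-and-carry map to be non-degenerate on long constant blocks.'' Nor does switching to a building block with $j(r_P)\approx P$ (say alternating digits) help: in base $3$ with digits $0,1,0,1,\dots$ on $P$, multiplication by $c=8$ collapses the block to constant digit $2$, so $j(cr_P)$ is again just two boundary points. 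Forward digit analysis of multiplication by an arbitrary real is the wrong tool here; for some $c$ it destroys jumps rather than creating them.

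The paper's proof works backward and thereby avoids analyzing digits of $cr$ altogether. Rather than start from $r\in I[\vv a]$ and compute $cr$, it uses density of $C:=c\,I[\vv a]$ in $\mathbb R$ to pick $y\in C$ with a \emph{prescribed} digit pattern: $j(y)\supseteq P_n$, $0\le y<1/(a_1\cdots a_{l_n})$, and $y_{m_n+2}\ne 0$ (approximate a hand-built $z$ from above and invoke Lemma~\ref{L:contj}). Then $v:=y/c\in I[\vv a]$ automatically, though $\phi(j(v))$ is uncontrolled. One then replaces $v$ by $w\in I[\vv a]$ with $\phi(j(w))<\epsilon$, obtained by zeroing a middle stretch of digits of $v$ while keeping $0\le v-w<\frac{1}{c\,a_1\cdots a_{m_n+2}}$; tallness is used only to make a bounded number of singletons past $m_n$ have small $\phi$-mass. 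Now $0\le y-cw<1/(a_1\cdots a_{m_n+2})$, and Lemma~\ref{L:rul}(ii) (using $y_{m_n+2}\ne 0$) forces $cw$ and $y$ to agree on digits up to $m_n+1$, whence $j(cw)\supseteq P_n$ and $\psi(j(cw))>d$. If $c\,I[\vv a]\subseteq J[\vv a]$, the map $x\mapsto cx$ is a Borel group homomorphism $I[\vv a]\to J[\vv a]$, hence continuous by Pettis; the $w$'s contradict continuity at $0$. No infinite assembly into a single element is needed.
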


\begin{proof} 
Since $I[{\vv a}]$ is closed under addition and taking additive inverses, it will suffice to show the conclusion only for $c\geq 1$. 
So we assume $c\geq 1$, and set 
\[
C= c \, I[{\vv a}], 
\]
with the aim to show that $C\not\subseteq J[{\vv a}]$. 

Since $I$ is not included in $J$ on intervals, there are lsc submeasures $\phi$ and $\psi$ such that $I={\rm Exh(\phi)}$ and $J={\rm Exh(\psi)}$, for which there 
exist $d > 0$ and a sequence of intervals $P_n=[l_n,m_n]$, with $m_n < l_{n+1}$, such that 
\begin{equation}\label{E:smla}
\psi(P_n) > d\;\hbox{ and }\;\phi(P_n) < 2^{-n}.
\end{equation} 
We also set $j= j_{\vv a}$.

\begin{claim}\label{Cl:y} 
Given $n \in \mathbb N$, there is $y \in C$ such that 
\begin{enumerate} 
\item[(i)]  $j(y) \supseteq P_n$; 

\item[(ii)]  $0\leq y < \frac{1}{a_1a_2\cdots a_{l_n}}$; 

\item[(iii)] $y_{m_n+2} \neq 0$.
\end{enumerate} 
\end{claim} 

\noindent{\em Proof of Claim~\ref{Cl:y}.} To find such $y$, pick $z\in [0,1)$ so that 
 \begin{equation}\label{E:jz}
         j(z) \cap [1,m_n]  = P_n;\;z_{m_n+2} \neq 0; \; z_i = 0,\hbox{ for all }i \in {\mathbb N} \setminus [l_n+1, m_n+2]. 
 \end{equation}
Note that from \eqref{E:jz}, we have 
\begin{equation}\label{E:zsm} 
        0< z < \frac{1}{a_1 a_2 \cdots a_{l_n}}.
\end{equation} 
Since $C$ is dense in $\mathbb R$, 
we can approximate $z$ from above by elements $y$ of $C$. For a tight enough such approximation $y$, we get (ii) from \eqref{E:zsm}, (i) 
from \eqref{E:jz} and Lemma~\ref{L:contj}(ii), and (iii) from \eqref{E:jz} and Lemma~\ref{L:conv digi}(ii). The claim follows.

\begin{claim}\label{Cl:ant} 
Given $\epsilon>0$, for large enough $n\in {\mathbb N}$ the following statement holds. For each $v\in I({\vv a})$ with $0\leq v < \frac{1}{a_1a_2\cdots a_{l_n}}$, there exists 
$w\in  I({\vv a})$ such that 
\begin{enumerate} 
\item[(i)] $0\leq w\leq v< w+\frac{1}{c\, a_1a_2\cdots a_{m_n+2}}$; 

\item[(ii)] $\phi\big( j(w)\big) <\epsilon$. 
\end{enumerate} 
\end{claim}

\noindent{\em Proof of Claim~\ref{Cl:ant}.} 
Let $k_0\in {\mathbb N}$ be such that $2^{k_0}>c$. For large enough $n$, we have 
\begin{equation}\label{E:tne}
2^{-n}<\frac{\epsilon}{3}
\end{equation} 
and, by tallness of $I$ applied using condition \eqref{E:tal}, 
\begin{equation}\label{E:x}
\phi(X)<\frac{\epsilon}{3},\hbox{ for each set }X\hbox{ of size }\leq k_0+3 \hbox{ with } i> m_n\hbox{ for all }i\in X.
\end{equation}

Fix $n$ with \eqref{E:tne} and \eqref{E:x}. Since $v\in I[{\vv a}]$ and $v \geq 0$, we have $j(v) \in I$, so there is $N \in \mathbb N$ with $N > m_n+k_0+2$ and such that
\begin{equation}\label{E:sm2} 
\phi\big(j(v)\setminus [1,N] \big) < \epsilon/3.
\end{equation}
Let 
\[
K= [ m_n+1, \; m_n+k_0+2],
\]
and 
define $w$ to be the real number such that 
\begin{equation}\notag
w_i= 
\begin{cases}
v_i, &\hbox{ if } i \leq m_n+k_0+2 \hbox{ or } i \geq N+1,\\
0,  &\hbox{ otherwise}.
\end{cases}
\end{equation}
Since $v$ and $w$ differ on finitely many digits and $v\in  I[{\vv a}]$, it follows from the definition of the underlying set of $I[{\vv a}]$ that $w\in  I[{\vv a}]$. 

We check that $w$ satisfies the two properties in the claim. First, directly from the definition we have 
\begin{equation}\notag
0 \leq v-w < \frac{1}{a_1 \cdots a_{m_n+2} \cdots a_{m_n+k_0+2}}\leq \frac{1}{a_1 \cdots a_{m_n+2} \, 2^{k_0}}.
\end{equation} 
from which, by our choice of $k_0$, we get (i).
Second, since $0\leq w \leq v < \frac{1}{a_1a_2\cdots a_{l_n}}$, using Lemma~\ref{L:rul}(i),  we see that the first $l_n$ digits of $w$ are 0. It follows that 
\begin{equation}\label{E:sm1}
j(w) \cap [1,l_n-1] = \emptyset. 
\end{equation}
Note that $j(w) \cap [m_n+1,N]$ is a subset of $K  \cup \{N\}$. Hence, by \eqref{E:x}, we get 
\begin{equation}\label{E:sm3}
\phi\big(j(w) \cap [m_n+1,N]\big) \leq \phi\big(K \cup \{N\}\big) < \epsilon/3.
\end{equation} 
From the definition of $w$ and by \eqref{E:sm2}, we have 
\begin{equation}\label{E:sm4}
\phi\big(j(w) \setminus [1,N]\big) = \phi\big(j(v)\setminus [1,N]\big) < \epsilon/3.   
\end{equation}
Putting together \eqref{E:sm1}, \eqref{E:sm3} and \eqref{E:sm4}, we get 
\[
\phi(j(w)) < 0 + \phi(P_n) + \epsilon/3+\epsilon/3 = \phi(P_n) + 2\epsilon/3.
\]
The above inequality implies the conclusion of the claim by  \eqref{E:smla}  and \eqref{E:tne}.

\smallskip
We use Claims~\ref{Cl:y} and \ref{Cl:ant} to show that given $\epsilon > 0$, there exists $w \in I[{\vv a}]$ such that 
 \begin{enumerate}
            \item[(a)] $0\leq w< \epsilon$;
            \item[(b)] $\phi\big(j(w)\big) < \epsilon$;
            \item[(c)] $\psi\big(j(c\,w)\big) > d$.
\end{enumerate}

Let $\epsilon>0$. Fix $n$ large enough so the conclusion of Claim~\ref{Cl:ant} holds and 
\begin{equation}\label{E:eam} 
\frac{1}{a_1a_2\cdots a_{l_n}}<\epsilon.
\end{equation} 
For this $n$, Claim~\ref{Cl:y} produces $y\in C$ with properties (i)--(iii) from that claim. 
Set 
\[
v=   \frac{y}{c}. 
\]
Note that $v \in I[{\vv a}]$ since $y\in C$. Using the inequality $c\geq 1$ and Claim~\ref{Cl:y}(ii), we get
\begin{equation}\label{E:ine}
0\leq v \leq y< \frac{1}{a_1 a_2 \cdots a_{l_n}}.
\end{equation} 
So Claim~\ref{Cl:ant} applies to $v$ producing $w\in I[{\vv a}]$ with properties (i) and (ii) from that claim.

Since, by Claim~\ref{Cl:ant}(i), $0\leq w\leq v$, we get (a) from \eqref{E:eam} and \eqref{E:ine}. Point (b) is immediate 
from Claim~\ref{Cl:ant}(ii). By Claim~\ref{Cl:ant}(i), we have 
\[
0\leq c\, w\leq y< c\, w+\frac{1}{a_1a_2\cdots a_{m_n+2}}.
\]
These inequalities imply (c) by Claim~\ref{Cl:y}(i) and (iii), Lemma~\ref{L:rul}(ii), and \eqref{E:smla}.

Now, to finish the proof of the theorem, assume for contradiction that $C \subseteq J[{\vv a}]$. Then $x \rightarrow cx$ defines a function from $I[{\vv a}]$ to $J[{\vv a}]$. 
The map $x \rightarrow cx$, being a Borel map from $\mathbb R$ to $\mathbb R$, is also a Borel map from $I[{\vv a}]$ to $J[{\vv a}]$ both taken with the submeasure topologies by 
\cite[Corollary~15.2]{Ke}. 
Moreover, it is a group homomorphism, so by Pettis Theorem \cite[Theorem~9.10]{Ke}, it is continuous. However, the existence of $w\in I[{\vv a}]$ for each $\epsilon>0$  with 
properties (a)--(c) implies, by Lemma~\ref{L:tar}, that this map is discontinuous at $0$, yielding a contradiction. 
\end{proof}

\subsection{Consequences concerning module homomorphisms}\label{S:4} 

We derive some consequences of Theorem~\ref{T:D} for homomorphisms among modules constructed in Section~\ref{S:2}. 
We will use these consequences to prove Theorem~\ref{T:B}.

\begin{lemma}\label{C:noni}
Let $I,J$ be translation invariant, analytic P-ideals, with $I$ being tall.  Let $\vv a$ be a uniform base sequence adapted to both $I$ and $J$. 
If $I$ is not included in $J$ on intervals, then each continuous ${\mathbb Q}_{\vv a}$-module homomorphism from $I[{\vv a}]$ to $J[{\vv a}]$ is identically equal to $0$. 
\end{lemma}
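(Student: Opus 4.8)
The plan is to reduce the statement about continuous $\mathbb{Q}_{\vv a}$-module homomorphisms $\varphi\colon I[{\vv a}]\to J[{\vv a}]$ to the non-containment statement already proved in Theorem~\ref{T:D}. The key point is that a continuous module homomorphism out of $I[{\vv a}]$ must, in a suitable sense, act by multiplication by a single real number on a large piece of the module, so that if $\varphi$ were not identically zero we would obtain $c\,I[{\vv a}]\subseteq J[{\vv a}]$ for some nonzero real $c$, contradicting Theorem~\ref{T:D}.

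Here is how I would carry this out. First I would recall that $F\cup -F$, the set of elements of $\mathbb R$ with finitely many non-zero digits (together with their negatives), is dense in $I[{\vv a}]$ by Lemma~\ref{L:separ} (using tallness, hence in particular that $I$ contains an infinite set). Note also that each element of $F$ lies in $\mathbb{Q}_{\vv a}$: indeed a number with finitely many nonzero digits $r = \sum_{k=1}^{n} r_k/(a_1\cdots a_k)$ has denominator dividing $a_1\cdots a_n$, and since $\vv a$ is \emph{uniform}, every prime dividing some $a_k$ lies in ${\rm pr}({\vv a})$, so $r\in {\mathbb Q}_{\vv a}$. Hence $F$ consists of ring elements, and for $r\in F$ we have $\varphi(r) = \varphi(r\cdot 1) = r\,\varphi(1)$ (viewing $1\in I[{\vv a}]$, which holds since $j(1)=\emptyset\in I$). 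Thus, writing $c = \varphi(1)\in J[{\vv a}]\subseteq\mathbb R$, the map $\varphi$ agrees with multiplication by $c$ on the dense subgroup generated by $F$. Here is where uniformity is essential: without it $F$ need not consist of scalars and one could not pin down $\varphi$ by its value at $1$.

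Next, since $\varphi$ is continuous from $I[{\vv a}]$ (with the submeasure topology) to $J[{\vv a}]$, and the identity $J[{\vv a}]\to\mathbb R$ is continuous by Theorem~\ref{T:C}, the composite $I[{\vv a}]\to\mathbb R$, $x\mapsto \varphi(x)$, is continuous; it agrees with $x\mapsto cx$ on a dense set, and $x\mapsto cx$ is also continuous from $I[{\vv a}]$ into $\mathbb R$ (the submeasure topology refines the topology inherited from $\mathbb R$, again by Theorem~\ref{T:C} / Lemma~\ref{L:tar}). Two continuous maps into the Hausdorff space $\mathbb R$ that agree on a dense set are equal, so $\varphi(x) = cx$ for \emph{all} $x\in I[{\vv a}]$. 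In particular $c\,I[{\vv a}] = \varphi(I[{\vv a}])\subseteq J[{\vv a}]$. If $c\neq 0$, this contradicts Theorem~\ref{T:D} (whose hypotheses — $I,J$ translation invariant analytic P-ideals, $I$ tall, $\vv a$ adapted to both, $I$ not included in $J$ on intervals — are exactly those assumed here). Therefore $c = 0$, i.e.\ $\varphi$ is identically $0$.

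The main obstacle, and the step deserving the most care, is justifying that a continuous homomorphism is determined on a dense subgroup: one must be sure the dense subgroup really consists of $\mathbb{Q}_{\vv a}$-multiples of the identity so that the homomorphism property forces $\varphi = c\cdot(-)$ there, and this is precisely the role of the uniformity hypothesis on $\vv a$ (so that $F\subseteq\mathbb{Q}_{\vv a}$) and of $1\in I[{\vv a}]$. Once that is in place, the rest is a routine density-plus-continuity argument feeding into Theorem~\ref{T:D}; no new estimates are needed.
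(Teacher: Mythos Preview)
Your proposal is correct and follows essentially the same approach as the paper: the paper factors the argument through Lemma~\ref{L:hom} (every continuous ${\mathbb Q}_{\vv a}$-module homomorphism $I[{\vv a}]\to J[{\vv a}]$ is multiplication by some real $c$) and then invokes Theorem~\ref{T:D}, while you inline the proof of Lemma~\ref{L:hom}---density of $F\cup -F$ via Lemma~\ref{L:separ}, the identification $F\cup -F\subseteq{\mathbb Q}_{\vv a}$ from uniformity, and the continuity/density extension---before applying Theorem~\ref{T:D}. The substance and the role of each hypothesis (uniformity, tallness, adaptation) are identical.
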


To prove Lemma~\ref{C:noni}, we need a result on the form of continuous module homomorphisms.

\begin{lemma}\label{L:hom} 
Let $I, J$ be translation invariant, analytic P-ideals, with $I$ having an infinite subset of $\mathbb N$ as element. Let $\vv a$ be a uniform base sequence adapted to $I$ and $J$. 
If $f\colon I[{\vv a}]\to J[{\vv a}]$ is a continuous ${\mathbb Q}_{\vv a}$-module homomorphism, then there exists $c\in {\mathbb R}$ with 
$f(y)=c\, y$ for all $y\in I[{\vv a}]$. 
\end{lemma}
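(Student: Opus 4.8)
\textbf{Proof proposal for Lemma~\ref{L:hom}.}

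\emph{Approach.} The plan is to exploit the fact that, as a map of subsets of $\mathbb R$ equipped with the submeasure topologies, $f$ is continuous, while both $I[{\vv a}]$ and $J[{\vv a}]$ carry the finer submeasure topology refining the one inherited from $\mathbb R$, with $F\cup -F$ (reals with finitely many non-zero digits) dense in each. The idea is to first pin down $f$ on a single convenient element, use $\mathbb Q_{\vv a}$-linearity to propagate the value of $c$ to a dense $\mathbb Q_{\vv a}$-submodule, and then use continuity plus density to conclude $f(y)=c\,y$ everywhere. The uniformity of $\vv a$ is what makes $\mathbb Q_{\vv a}$-multiplication rich enough: since every prime dividing some $a_n$ divides all but finitely many $a_n$, the ring $\mathbb Q_{\vv a}$ inverts exactly the primes appearing in the base, so rationals of the form $k/(a_1\cdots a_n)$ all lie in $\mathbb Q_{\vv a}$, and these are precisely the coefficients that arise from the positional representation \eqref{E:real}.

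\emph{Key steps, in order.} First I would fix an element $u\in I[{\vv a}]$ that is a single ``basic'' real, e.g.\ something of the form $u = 1/(a_1\cdots a_m)$ for a suitable $m$; such $u$ has finitely many non-zero digits, hence $j(u)$ finite, so $u\in I[{\vv a}]$, and likewise $q\cdot u\in I[{\vv a}]$ for every $q\in\mathbb Q_{\vv a}$. Set $c$ so that $f(u)=c\,u$ (possible since $f(u)\in\mathbb R$ and $u\ne 0$). Second, by $\mathbb Q_{\vv a}$-linearity of $f$, we get $f(q\,u)=q\,f(u)=q\,c\,u=c\,(q\,u)$ for all $q\in\mathbb Q_{\vv a}$; in particular $f(v)=c\,v$ for every $v$ of the form $k/(a_1\cdots a_n)$ with $k\in\mathbb Z$, since $k/(a_1\cdots a_n)$ is an integer multiple of $1/(a_1\cdots a_n)=(a_1\cdots a_m/a_1\cdots a_n)\,u$ once $n\ge m$ (and the coefficient is in $\mathbb Q_{\vv a}$ by uniformity). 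Third, I would argue that the $\mathbb Q_{\vv a}$-span of $u$ contains all of $F$, or at least a dense subset of $I[{\vv a}]$: any $r\in F$ is a finite sum $[r]+\sum_{n\le L} r_n/(a_1\cdots a_n)$, i.e.\ a $\mathbb Z$-combination of the elements $1/(a_1\cdots a_n)$, hence $f(r)=c\,r$ for all $r\in F$, and similarly for $-F$. Fourth, by Lemma~\ref{L:separ} (applicable since $I$ has an infinite element), $F\cup -F$ is dense in $I[{\vv a}]$; and since the map $y\mapsto f(y)-c\,y$ is continuous from $I[{\vv a}]$ (submeasure topology) to $\mathbb R$ (using that $f$ is continuous into $J[{\vv a}]$, whose topology refines that of $\mathbb R$, and that scalar multiplication by the fixed real $c$ is continuous on $\mathbb R$), it vanishes on a dense set, hence identically. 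Therefore $f(y)=c\,y$ for all $y\in I[{\vv a}]$.

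\emph{Main obstacle.} The delicate point is Step 3: verifying that integer multiples and $\mathbb Q_{\vv a}$-multiples of a single $u$ actually produce a \emph{dense} subset of $I[{\vv a}]$ in the submeasure topology, not merely in the $\mathbb R$-topology. Density of $F\cup -F$ comes for free from Lemma~\ref{L:separ}, so the real content is showing every element of $F$ is hit, which reduces to the arithmetic observation that $1/(a_1\cdots a_n)\in\mathbb Q_{\vv a}\cdot u$ — and this is exactly where uniformity of $\vv a$ is essential, since without it $\mathbb Q_{\vv a}$ might fail to contain $1/a_n$ for some late $n$. I would also need to double-check the edge case where $I$ is the ideal of finite sets (then $I[{\vv a}]$ is countable with the discrete topology and the argument still goes through, with ``dense'' being trivial), and to confirm that the value $c$ obtained is independent of the chosen $u$, which is automatic once we know $f(y)=c\,y$ holds on a set containing more than one nonzero point.
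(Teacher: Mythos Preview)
Your proposal is correct and follows essentially the same strategy as the paper: determine $f$ on a cyclic $\mathbb{Q}_{\vv a}$-submodule, use uniformity of $\vv a$ to identify this submodule with $F\cup -F$, invoke Lemma~\ref{L:separ} for density, and extend by continuity into $\mathbb{R}$. The paper streamlines your Step~3 by taking $u=1$ and observing directly that $F\cup -F = \mathbb{Q}_{\vv a}$ (uniformity gives both inclusions), so that $c=f(1)$ and $f(q)=q\,f(1)=c\,q$ for every $q\in\mathbb{Q}_{\vv a}=F\cup -F$ in one line; also note that the edge case you flag (\,$I$ equal to the finite-set ideal\,) is already excluded by the hypothesis that $I$ contains an infinite set.
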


\begin{proof} 
We begin with showing that ${\mathbb Q}_{\vv a}$ is dense in $I[{\vv a}]$. 
Let $F$ be the collection of elements of ${\mathbb R}_{\geq 0}$ that have finitely many non-zero digits. 
By Lemma~\ref{L:separ}, $F \cup -F$ is dense in $I[\vv{a}]$. We show $F \cup -F = {\mathbb Q}_{\vv a}$. 

Suppose $r$ is non-negative and in ${\mathbb Q}_{\vv a}$, that is, $r= \frac{k}{l}$, where $k\in {\mathbb N}$ and $l$ is a product of primes in ${\rm pr}({\vv{a}})$. 
Each prime in ${\rm pr}({\vv{a}})$ divides $a_n$ for all but finitely many $n$. Thus, there is $N$ such that $l \mid a_1a_2\cdots a_N$. Therefore, $a_1a_2\cdots a_N r$ is an integer, 
which implies that $r_n = 0$ for all $n > N$. It follows that $r$ is in $F$. By the same argument, we see that if 
$r \in {\mathbb Q}_{\vv a} $ is non-positive, then $r$ is in $-F$. 

Conversely, if $r$ is in $F \cup -F$, there exists $N$ such that $a_1a_2\cdots a_Nr$ is an integer. Hence $r$ is of the form $\frac{k}{a_1a_2\cdots a_N}$. Each prime $p$ that 
divides $a_1 a_2\cdots a_N$ divides some $a_n$, so $p$ is in ${\rm pr}({\vv{a}})$ by the uniformity of $\vv a$. Therefore, $r$ is in  
${\mathbb Q}_{\vv a} $. 

We continue our proof of Lemma~\ref{L:hom}. 
Note that $1$ is in $I[{\vv a}]$ since $1$ has finitely many non-zero digits. Set 
\[
c= f(1). 
\]
Note that for each $y\in  {\mathbb Q}_{\vv a}$, we have 
\begin{equation}\label{E:cru} 
f(y) = f(y1)= yf(1) =  c\, y,
\end{equation}
because $f$ is a ${\mathbb Q}_{\vv a}$-module homomorphism. 

Since $f$ is continuous as a function from $I[{\vv a}]$ to $J[{\vv a}]$, and the topology on $J[{\vv a}]$ contains the topology it inherits from $\mathbb R$, 
$f$ is continuous as a function from $I[{\vv a}]$ to $\mathbb R$. Similarly, since the topology on $I[{\vv a}]$ contains the topology it inherits from $\mathbb R$, 
the function $y\to c\,y$ is continuous as a function from $I[{\vv a}]$ to ${\mathbb R}$. It follows that \eqref{E:cru} holds for all $y$ in the closure of ${\mathbb Q}_{\vv a}$ 
in $I[{\vv a}]$, which gives the lemma as ${\mathbb Q}_{\vv a}$ is dense in $I[{\vv a}]$. 
\end{proof}

\begin{proof}[Proof of Lemma~\ref{C:noni}]
The conclusion of the corollary follows from Theorem~\ref{T:D} and Lemma~\ref{L:hom}. 
\end{proof}

\section{Proof of Theorem~\ref{T:B}}  

We construct a family $I_x$, $x\subseteq {\mathbb N}$, of analytic P-ideals that will allow us to apply Lemma~\ref{C:noni} to obtain Theorems~\ref{T:B}. 
This family is probably the simplest one that can serve our purpose, but some more complicated families from the literature would also work; see, for example, \cite{LV}.

For $k\in {\mathbb N}$, let 
\[
P_k = \{ n\in {\mathbb N}\mid 2^{k-1}\leq n<2^k\}. 
\]
We note the following properties of the intervals $P_k$: 
\begin{enumerate}
\item[(a)] $\sum_{n\in P_k} \frac{1}{n} \geq \frac{1}{2}$; 

\item[(b)] $\sum_{n\in a} \frac{1}{n} <\infty$, for each $a\subseteq {\mathbb N}$ that has at most one point in common with $P_k$ for each $k$. 
\end{enumerate}
For $x\subseteq {\mathbb N}$, set $A_x= \bigcup_{k\in x}P_k$, and let, for $a\subseteq {\mathbb N}$, 
\[
\phi_x(a) = \sum_{n\in a\cap A_x} \frac{1}{2^n}  + \sum_{n\in a\setminus A_x} \frac{1}{n}. 
\]
It is clear that $\phi_x$ is a lsc submeasure. 
Finally, define 
\[
I_x = {\rm Exh}(\phi_x) 
\]
and observe that 
\[
I_x= \{ a\subseteq {\mathbb N}\mid \phi_x(a)<\infty\}. 
\]
It is then clear that $I_x$ is an analytic P-ideal and, using (b), that it is translation invariant and tall. 
Furthermore, observe that, for $x, y\subseteq {\mathbb N}$, 
\begin{equation}\label{E:ime1}
x\setminus y \hbox{ finite} \Longrightarrow I_x\subseteq I_y,
\end{equation} 
\begin{equation}\label{E:ime2}
x\setminus y \hbox{ infinite} \Longrightarrow I_x \hbox{ is not included in } I_y \hbox{ on intervals}.
\end{equation} 
The first implication is clear since the assumption gives that $A_x\setminus A_y$ is finite. To see the second implication, note that if 
$x\setminus y$ is infinite, then the sequence of intervals $(P_k)_{k\in x\setminus y}$ witnesses the non-inclusion on intervals as, by (a), $\phi_y(P_k)\geq 1/2$ for $k\in x\setminus y$, 
while $\phi_x(P_k)\to 0$ for $x\setminus y\ni k\to \infty$.

\begin{proof}[Proof of Theorem~\ref{T:B}]
Let $R$ be a subring of $\mathbb Q$ not equal to $\mathbb Z$; that is, 
\[
R = P^{-1} {\mathbb Z}
\]
for a nonempty set $P$ of primes. It is easy to find a uniform base sequence $\vv {\bf a}$ with 
\[
P= {\rm pr}({\vv a})\;\hbox{ and  }\;\{ k\mid a_k\not= a_{k+1}\} \subseteq \{ 2^k\mid k\in {\mathbb N}\}. 
\]
Note that the first equality gives $R= {\mathbb Q}_{\vv a}$ and the second one ensures that $\vv a$ is adapted to $I_x$ for each $x\subseteq {\mathbb N}$. 

Then, for each $x\subseteq {\mathbb N}$, $I_x[{\vv a}]$ is a Polish $R$-module by Theorem~\ref{T:C}. 
Let $f_x\colon I_x[{\vv a}]\to {\mathbb R}$ be the identity map. This is a continuous $R$-embeding into $\mathbb R$ 
Note that since each $I_x$ is $F_\sigma$, it follows from Lemma~\ref{L:hcomp} that $I_x[{\vv a}]$ is an $F_\sigma$ as a subset of $\mathbb R$. 

Observe that, by Proposition~\ref{P:incl} and \eqref{E:ime1}, if $x\setminus y$ is finite, $x,y\subseteq {\mathbb N}$, then $I_x[{\vv a}]\subseteq I_y[{\vv a}]$ as subsets of $\mathbb R$. 
The identity map $I_x[{\vv a}]\to I_y[{\vv a}]$ is obviously an $R$-module homomorphism and it is continuous by Pettis Theorem \cite[Theorem 9.10]{Ke}. 
If $x\setminus y$ is infinite, then the conclusion follows from \eqref{E:ime2} and Lemma~\ref{C:noni}.
\end{proof}

\end{document}